\crefname{equation}{}{}
\crefname{algocf}{Algorithm}{Algorithms}
\crefname{equation}{}{} %
\crefname{algocf}{Algorithm}{Algorithms}
\definecolor{green}{rgb}{0,0.8,0} %
\definecolor{babypink}{rgb}{0.96,0.76,0.76}
\newcommand{\rt}{{\mathbb R^3}}
\newcommand{\pa}{\partial}
\newcommand{\grad}{\nabla_x}
\newcommand{\gab}{g^{\alpha\beta}}
\newcommand{\hab}{h^{\alpha\beta}}
\newcommand{\Hab}{H^{\x\xb}}
\newcommand{\pab}{\partial_\beta}
\newcommand{\paa}{\partial_\alpha}
\newcommand{\paab}{\partial_{\alpha\beta}}
\newcommand{\pat}{\partial_t}
\newcommand{\pai}{\partial_i}
\newcommand{\pao}{\partial\mkern-10mu /\,}
\newcommand{\ti}{\tilde}
\newcommand{\la}{\langle}
\newcommand{\ra}{\rangle}
\newcommand{\ls}{\lesssim}
\newcommand{\inv}{^{-1}}
\newcommand{\invh}{^{-\f12}}
\newcommand{\de}{\nabla_{t,x}} %
\newcommand{\f}{\frac}
\newcommand{\iy}{\infty}
\renewcommand{\S}{{\mathbb S}}
\newcommand{\crt}{{C^{ R}_{ T}}}
\newcommand{\cut}{{C^{ U}_{ T}}}
\newcommand{\crtt}{{\ti C^{ R}_{ T}}}
\newcommand{\licut}{{L^\iy\lr{C^{ U}_{ T}}}}
\newcommand{\inte}{{C^{<3T/4}_{ T}}}
\newcommand{\co}{{D_{tr}^{R}}}
\newcommand{\dtr}{{D_{tr}}}
\newcommand{\lolt}{{L^1L^{2}}}
\newcommand{\ltlt}{{L^2L^2}}
\newcommand{\lt}{L^2}
\newcommand{\p}{\phi}
\newcommand{\supp}{\text{supp\,}}
\let\pminus\pm
\renewcommand{\pm}{\phi_{\le m}}
\newcommand{\pmn}{\p_{\le m+n}}
\newcommand{\nm}{\la t-r\ra} %
\newcommand{\jr}{\la r\ra} %
\newcommand{\ju}{\la u\ra}
\newcommand{\jt}{\la t\ra}
\newcommand{\jv}{\la v\ra}
\newcommand{\lr}[1]{\left( #1 \right)}
\newcommand{\tpsi}{{\widetilde{\psi}}}
\newcommand{\leo}{{LE^1}}
\def\doi#1{ {\href{http://dx.doi.org/#1}
   {{\mdseries\ttfamily DOI}}}}
\newtheorem{theorem}{Theorem}[section]
\newtheorem{corollary}[theorem]{Corollary}
\newtheorem{lemma}[theorem]{Lemma}
\newtheorem{proposition}[theorem]{Proposition}
\theoremstyle{definition}
\crefname{claim}{Claim}{Claims}
\newtheorem{definition}[theorem]{Definition}
\theoremstyle{remark}
\newtheorem{remark}[theorem]{Remark}
\theoremstyle{conjecture}
\numberwithin{equation}{section}
\newcommand{\x}{\alpha}
\newcommand{\xb}{\beta}
\newcommand{\xd}{\delta}
\newcommand{\eps}{\epsilon}
\newcommand{\xk}{\kappa}
\newcommand{\xo}{\omega}
	\newcommand{\xO}{\Omega}
	\newcommand{\xS}{\Sigma}
\newcommand{\N}{{\mathbb N}}
\newcommand{\R}{\mathbb R}
\newcommand{\calM}{\mathcal M}
\newcommand{\calR}{\mathcal R}
\newcommand{\calT}{\mathcal T}
\newcommand{\calV}{\mathcal V}
\newcommand{\Lbar}{\underline{L}}
\newcommand{\uL}{\Lbar}
\begin{document}

\title{Improved decay for quasilinear wave equations close to asymptotically flat spacetimes including black hole spacetimes}
\author{Shi-Zhuo Looi}
\address{Department of Mathematics, University of Kentucky, Lexington, KY 40506}
\email{Shizhuo.Looi@gmail.com}

\begin{abstract}We study the quasilinear wave equation $\Box_{g}\phi=0$ where the metric $g = g(\p,t,x)$ is close to and asymptotically approaches $g(0,t,x)$, which equals the Schwarzschild metric or a Kerr metric with small angular momentum, as time tends to infinity. 
	Under only weak assumptions on the metric coefficients, we prove an improved pointwise decay rate for the solution $\p$. One consequence of this rate is that for bounded $|x|$, we have the integrable decay rate $|\p(t,x)| \le Ct^{-1-\min(\xd,1)}$ where $\xd>0$ is a parameter governing the decay, near the light cone, of the coefficient of the slowest-decaying term in the quasilinearity.
		We obtain the same aforementioned pointwise decay rates for the quasilinear wave equation $(\Box_{\ti g} + B^{\x}(t,x)\paa + V(t,x))\p=0$ with a more general asymptotically flat metric $\ti g =\ti g(\p,t,x)$ and with other time-dependent asymptotically flat lower order terms. 
\end{abstract}

\maketitle

\section{Introduction}

We study the quasilinear initial value problem 	%
\begin{equation}   \label{eq:problem}
\begin{cases}
 \Box_{g_{B}}\phi(\ti t,x) =  f(\pa^2\p,\p,\ti t,x), \qquad (\ti t,x) \in \calM, \ \ f :=  (\Hab(\ti t,x) \p + O(\p^{2})) \paa\pab \p \\
\p(\ti t=0,x) = \p_0(x), \quad \ti T \p(\ti t=0,x) = \p_1(x)
\end{cases}.
\end{equation}where $g_{B}$ denotes either the Schwarzschild metric or the Kerr metric with small angular momentum, 
$$\Box_g = |g|^{-1/2} \pa_\mu(|g|^{1/2} g^{\mu\nu} \pa_\nu )$$ 
and $\calM:= \{ r>r_e\}$ (for some constant\footnote{In the Kerr case, $r_e$ is a number such that $r_e \in (r_-,r_+)$ where $r_->0$ corresponds to the Cauchy horizon and $r_+$ to the event horizon. See also the appendix (\cref{app}) for more detail on the Kerr metric. } $r_e>0$) is a manifold which contains the domain of outer communication. 
	We also study the problem with a more general asymptotically flat operator on the left-hand side of the equation in \cref{eq:problem}. 
	Here, $\ti t$ is a coordinate with the property that $\{ \ti t=0\}$ is spacelike and $\ti t = t$ away from the black hole, and 
	$\ti T$ denotes a smooth and everywhere-timelike vector field such that $\ti T =\pat$ away from the black hole. 
	To state a simpler version of our main theorem, we introduce the Schwarzschild spacetime with mass $M>0$: it is a spherically symmetric solution of the Einstein vacuum equation and is given by 
	$$g = -\lr{1 - \f{2M}r} d t^2+ \lr{1-\f{2M}r}\inv dr^2 + r^2 g_\xo$$
on $\R_{t} \times (2M,\iy)_r \times \mathbb S^2$ where $g_\xo$ is the standard metric on $\mathbb S^2$. Henceforth for simplicity of notation we shall write $t$ to denote $\ti t$. 

\begin{theorem}[Simple special case of main theorem] \label{thm:simple}
Let $g_B$ in \cref{eq:problem} denote the Schwarzschild metric and let $\p$ denote the global solution to the problem \cref{eq:problem} with small, smooth and compactly supported initial data $(\p_0,\p_1)$, and let $0 < r_e < 2M$, where $\{r=2M\}$ denotes the event horizon. 
Fix a compact subset $K$ of $(r_e,\iy)\times \mathbb S^2$. Then there exists a constant $C>0$ such that 
$$|\p(t,x)| \le C t^{-1-\min(\xd,1)}, \quad x \in K$$where $\xd>0$ is a number such that a certain subset of the coefficients $\Hab$ obeys the upper bound $(1 +|t-r|)^\xd / (1 + t)^\xd$ near the light cone $r=t$ (see \cref{met.ass}). 
Applying any number or combination of partial derivatives, rotation vector fields $\xO_{ij} = x^i \pai - x^j \pai$, or the scaling vector field $S = t\pat + r\pa_r$ to $\phi$ does not change the above estimate (but changes $C$). 

In addition, the same pointwise decay rate for $|\p(t,x)|$ holds for a more general class of asymptotically flat and time-dependent metrics, and it also holds if we add both first-order and zeroth-order terms that are asymptotically flat and time-dependent to the equation. 
\end{theorem}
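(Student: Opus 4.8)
The plan is to prove the pointwise decay estimate by combining three ingredients: (i) a robust local energy decay (integrated local energy, or Morawetz) estimate for the linear operator $\Box_{g_B}$ on the exterior region $\calM$, together with its higher-derivative analogues obtained by commuting with the vector fields $\pa$, $\xO_{ij}$, and $S$; (ii) an $r^p$-weighted energy hierarchy in the spirit of Dafermos--Rodnianski, adapted to the quasilinear and time-dependent setting, which converts spatial decay of the energy flux into time decay; and (iii) a bootstrap/continuity argument on a suitable weighted norm that closes the nonlinear and lower-order contributions, using smallness of the data and the decay parameter $\xd>0$ to absorb the error terms $f = (\Hab\p + O(\p^2))\paa\pab\p$ and the first- and zeroth-order perturbations $B^\x\paa + V$. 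The iteration scheme is the standard one for such problems: write $\phi = \sum_m \phi_{\le m}$ as a dyadic-in-frequency or dyadic-in-time sum, at each stage solving a linear equation with coefficients frozen at the previous stage, and propagate the decay estimates through the iteration using the vector field commutators.

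The concrete steps, in order, would be: \textbf{Step 1.} Establish (or invoke, from the body of the paper) the local energy decay estimate $\| \phi \|_{LE^1} \lesssim \| \phi[0] \|_{\dot H^1 \times L^2} + \| f \|_{LE^*}$ and its commuted versions; this is where the structure of the Schwarzschild/Kerr metric near the horizon and near the photon sphere enters, and it is essentially classical (Tataru--Tohaneanu, Metcalfe--Tataru--Tohaneanu, Dafermos--Rodnianski) but must be stated with the time-dependent asymptotically flat perturbations allowed. \textbf{Step 2.} Run the $r^p$-weighted estimates on the far region $r \gtrsim t$ to obtain $\int_{t_1}^{t_2} E_{\text{deg}}(\tau)\, d\tau \lesssim E_{p}(t_1)$ for appropriate $p \in (0,2)$; interpolating against the boundedness of the top-level weighted energy gives decay of the degenerate energy at the rate dictated by how many $r^p$-weights one can afford, which in turn is limited by the decay rate $\xd$ of the worst coefficient in $\Hab$ near $r = t$. \textbf{Step 3.} Convert energy decay to pointwise decay on compact $K$ via Sobolev embedding on dyadic time slabs together with the commuted estimates (the vector fields $\pa, \xO_{ij}, S$ giving enough regularity to apply a Klainerman--Sobolev-type inequality or simply elliptic estimates on fixed spatial balls), obtaining $|\phi(t,x)| \lesssim t^{-1-\min(\xd,1)}$ with the same bound for $\pa^a \xO^b S^c \phi$. \textbf{Step 4.} Feed this back into the nonlinear term: since $f$ is quadratic in $\phi$ with at most two derivatives and the dangerous piece $\Hab \p \paa\pab\p$ has its coefficient decaying like $(1+|t-r|)^\xd/(1+t)^\xd$ near the cone, the source $\| f \|_{LE^*}$ on each dyadic slab is controlled by the previous stage's norms times a small factor, closing the bootstrap for sufficiently small data. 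The generalization to $\Box_{\ti g} + B^\x\paa + V$ is handled by noting that asymptotically flat, time-dependent first- and zeroth-order terms with the same $\xd$-type decay are subsumed into the same $LE^*$ bookkeeping, as the paper's more general framework (referenced after \cref{thm:simple}) is designed to accommodate.

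\textbf{The main obstacle} will be Step 2 combined with the interface between Steps 2 and 4: one must propagate the $r^p$-weighted hierarchy in the presence of a genuinely time-dependent metric and quasilinear coefficients that themselves only enjoy the borderline decay governed by $\xd$, so the weighted energy fluxes pick up error terms that are critical rather than subcritical when $\xd$ is small. Controlling these requires carefully tracking the interplay between the $|t-r|^\xd$ gain near the cone and the loss from the $r^p$ weights, and ensuring that the commuted equations for $\xO^b S^c \phi$ do not generate terms worse than the original — in particular the scaling vector field $S$ interacts nontrivially with both the $r^p$ weights and the inhomogeneous $t$-dependence of the coefficients. The quasilinear nature (as opposed to semilinear) adds the further technical burden that the metric $g = g(\phi, t, x)$ must be treated perturbatively around $g(0,t,x)$ with the energy estimates re-derived for the $\phi$-dependent metric at each iteration stage, which is where smallness of the data is essential and where the $\min(\xd,1)$ truncation in the exponent originates (the rate saturates at $t^{-2}$ because beyond that the quadratic nonlinearity itself, rather than its coefficient's decay, becomes the bottleneck).
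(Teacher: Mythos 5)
Your proposal takes a genuinely different route from the paper, and in its current form it has real gaps.

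\textbf{What the paper actually does.} After passing to normalised coordinates (so $\Box_{g_B}$ becomes $P = \Box + \text{decaying corrections}$), the paper does \emph{not} run an $r^p$-weighted hierarchy. Instead, it rewrites the equation as $\Box\phi = (\Box - P)\phi + f$, treats the right side as a forcing term $G$ with good pointwise decay (coming from \cref{assu}, \cref{2ndDeBd}, \cref{derbound}), and then applies the \emph{positive fundamental solution of the flat wave operator in $3{+}1$ dimensions}, integrating $\rho\, h(s,\rho)$ over the backward light cone $D_{tr}$ (\cref{conversion}, \cref{lem:cone}). This gives decay of the form $\jr^{-1}\ju^{-\kappa}$, which is then upgraded from $\{r\ge t/2\}$ into $\{r\le t/2\}$ (i.e. $\jr^{-1}\to\jt^{-1}$) via the SILED-based propagation result (\cref{lem:aux}, \cref{thm:r-t}). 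These two mechanisms alternate in a finite iteration (\cref{thm:iteration}), with the $\min(\delta,1)$ exponent emerging explicitly from the $(\ju/\jt)^\delta$ factor in \cref{met.ass} tracked through \cref{conversion}, and from the null-frame decomposition of $\Hab\pa_{\alpha\beta}\phi$ in the $\delta\ge 1$ case.

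\textbf{Where your proposal diverges, and where it is at risk.} (1) The $r^p$ method (Dafermos--Rodnianski) in its standard form, with $p\in(0,2)$ plus interpolation, gives pointwise decay of order $t^{-3/2}$ in a compact region; to reach $t^{-2}$ (the $\delta\ge1$ case) you would need an extended hierarchy in the spirit of Angelopoulos--Aretakis--Gajic, which is itself a substantial undertaking in a quasilinear, time-dependent setting and not something you can invoke as a black box. Moreover, it is not clear how the $r^p$ hierarchy alone produces the precise $t^{-1-\min(\delta,1)}$ rate as a function of the coefficient decay $\delta$ at the cone — the paper obtains this dependence directly by integrating $(\ju/\jt)^{\delta}$-weighted errors against the backward-light-cone kernel, and your Step 2 never shows how the $\delta$-gain survives the commutation with $r\partial_v$ and the $r^p$ weights. (2) You invoke a full ILED estimate in Step 1, but the paper deliberately assumes only the weaker \emph{stationary} version (\cref{def:SILED}), which controls only spatial derivatives on the left, precisely because full ILED for time-dependent perturbations of Kerr requires additional time-decay hypotheses on the metric; this distinction matters for the generality claimed. (3) The decomposition ``$\phi = \sum_m \phi_{\le m}$ as a dyadic-in-frequency or dyadic-in-time sum'' misreads the paper's notation: $\phi_{\le m}$ is $\sum_{|J|\le m} Z^J\phi$, a vector-field commutator bookkeeping device, not a dyadic decomposition of the solution, and there is no Littlewood--Paley or time-dyadic iteration in the paper's argument. (4) The paper's iteration is not a smallness bootstrap that re-proves existence; global existence is imported from \cite{LinToh1,LinToh2}, and the iteration only improves the pointwise \emph{decay rate} in finitely many steps.

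In short: your strategy (LED $+$ $r^p$ hierarchy $+$ Klainerman--Sobolev $+$ bootstrap) is a legitimate alternative family of techniques, but to make it produce the specific $t^{-1-\min(\delta,1)}$ rate with the stated $\delta$-dependence you would need to build an extended $r^p$ hierarchy tolerant of quasilinear, merely $\delta$-decaying coefficients near the cone, which you identify as a difficulty but do not resolve; the paper instead sidesteps this by working with the positive Minkowski kernel on $D_{tr}$ and a SILED-based interior propagation lemma.
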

	
	We are motivated to study the long-time behaviour of solutions to \cref{eq:problem} by way of the black hole stability problem which is, roughly speaking, the problem of showing that solutions to the Einstein equations with initial data close to a Kerr solution have domains of outer communication that converge toward a Kerr solution. In wave coordinates, the Einstein equations are a system of quasilinear wave equations, and a toy model of this system is given by equation \cref{eq:problem}. Thus \cref{eq:problem} is a toy model for the Einstein equations close to the Schwarzschild metric or a Kerr metric with small angular momentum $a$. See also \cref{app} (the appendix) for the definition of the Kerr metric. 

The articles \cite{LinToh1} and \cite{LinToh2} proved global existence for \cref{eq:problem} using a bootstrap for the Schwarzschild metric and Kerr metric with small $a$ respectively, under suitable assumptions on the metric coefficients for sufficiently small initial data. The bulk of their arguments was used to prove an integrated local energy decay (ILED) estimate\footnote{It is known as a Weak ILED estimate which is a weaker version of the usual ILED estimate. See \cref{def:EB} for the energy estimate and \cref{LED} for the usual ILED estimate.}, after which a bootstrap shows global existence in short order. Another immediate consequence of ILED is\footnote{See Theorem 5.3 in \cite{LinToh2}. The statement of Theorem 5.3 there is a $\jt\inv\la t-r^{*}\ra^{1/2}$ decay rate, but we simply refer to this as a $t^{-1/2}$ decay rate.} a $t^{-1/2}$ decay rate for the solution for bounded $|x|$. 

In this paper, we extend their results and contribute to the understanding of the long-time behaviour of \cref{eq:problem} by showing an integrable decay rate $|\p| \le Ct^{-1-\xd}$ if $\xd>0$ is a small parameter less than or equal to 1, and we moreover show that $|\p| \le Ct^{-2}$ if $\xd$ is taken to be larger than 1.
	Informally, the main theorem (\cref{thm:main}) states that assuming known energy bounds and ILED bounds for the problem \cref{eq:problem}, we obtain the aforementioned decay rate. 
	The parameter $\xd$ arises from the assumption \cref{met.ass} governing the decay rate of one of the $H$ coefficients of the quasilinearity $f$---more specifically, its decay rate near the light cone. 
		For comparison, we note that the linear problem $$\Box_{g_{B}}\p=0$$ exhibits a pointwise decay rate of $|\p(t,x)| \le Ct^{-3}$ for bounded $|x|$, %
		while as \cref{thm:simple,thm:main} indicate, our result implies that for 
		$$\Box_{g_B}\p = f(\partial^2\p,\p,t,x),$$
	we have $|\p(t,x)| \le Ct^{-1-\min(\xd,1)}$ for bounded $|x|$.

For $\Box_{g_B}\p=0$ with $g_B$ equal to the Schwarzschild metric, the solution to the wave equation was conjectured to decay at the rate of $t^{-3}$ on a compact region in \cite{Pri}. This rate of decay was shown to hold for the Schwarzschild spacetime, the subextremal Kerr spacetime with $|a|<M$, and other spacetimes; see, for instance, the works \cite{DSS,Tat,MTT,Hin,AAG3} and also the works \cite{AAG4,MaZhang1,MaZhang2}. 

\subsection*{Coordinates, and the operator $P$} %
We make a change of coordinates which near spatial infinity involves using precisely the Regge-Wheeler coordinates, in either Schwarzschild or Kerr. %
After a further conformal transformation, we can replace $\Box_{g}$ by an operator of the form 
\begin{equation}\label{P def}
P := \Box+g^{\xo}(t,r)\Delta_{\xo} + \paa s_{2}^{\x\xb}(t,x)\pab + s_{3}(t,x), \quad s_{j} \in S^{Z}(\jr^{-j}), \ g^{\xo}\in S^{Z}_{\text{radial}}(\jr^{-3})
\end{equation}where $\Box$ denotes the d'Alembertian in the Minkowski metric and $\jr := (1 + r^2)^{1/2}$. Here the functions $\{s_{2}^{\x\xb}\}_{\x,\xb}$ arise in the setting of Kerr, whereas they are equal to 0 in the setting of Schwarzschild.\footnote{The reader looking for more detail about normalised coordinates is encouraged to refer to Sections 2.1, 2.2 and 2.3 of \cite{Tat}, and here we only provide an overview of this change of coordinates.} %
We call these normalised coordinates.

Let $\ti r = \ti r(r)$ denote a smooth and strictly increasing function that equals $r$ for $r \le R_{1}$ and $r^{*}$ for $r \ge 2R_{1}$ for some number $R_{1} \gg 6M$. Thus in the intermediate region $(R_{1},2R_{1})$, $\ti r$ involves some smooth monotone interpolant between the usual $r$ coordinate and the Regge-Wheeler coordinate $r^{*}$. We work with $\ti r$ throughout this article, but for simplicity of notation, \textit{henceforth we denote $\ti r$ by either $|x|$ or $r$} (including the statement of the main estimate \cref{eq:main bound} in \cref{thm:main}).

Regarding the (minor) difference between $\ti t$ and the usual time coordinate $t$, we refer the reader to Section 2.1 of \cite{LinToh2} or Section 2 of \cite{LinToh1}.  In the Schwarzschild case $\ti t=t$ for $r>5M/2$. A similar remark holds for the Kerr case, but for Kerr one first performs a removal of singularities at $r_\pminus$. \textit{Henceforth we shall write $t$ to represent $\ti t$.}

	Thus the equation \cref{eq:problem} has been transformed to 
\begin{equation}\label{transformed}
P\p = f(\pa^{2}\phi,\phi,t,x), \quad f := \Hab(t,x)\p\paa\pab\p + O(\p^{2}) \paa\pab\phi.
\end{equation} 
In this article, we shall work with the equation \cref{transformed}.

\subsection*{Notation and local energy norms}
We state some notation that we use throughout the paper.
	We write $X\ls Y$ to denote $|X| \leq CY$ for an implicit constant $C$ which may vary by line. Similarly, $X \ll Y$ will denote $|X| \le c Y$ for a sufficiently small constant $c>0$. 
	The event horizon will play little role in our analysis and so we shall for simplicity of notation henceforth denote $\ti t$ by, more simply, $t$. 
	
	In $\calM$, we consider %
\[
\partial := (\partial_t, \partial_1,\pa_2,\pa_3), \qquad \Omega := (x^i \partial_j -
x^j \partial_i)_{i,j}, \qquad S := t \partial_t + \sum_{i=1}^3x^i \partial_{i},
\]
which are, respectively, the generators of translations, rotations and scaling. We denote the angular derivatives by $\pao$.
	We set
$$Z := (\pa,\Omega,S) $$ and we define the 
 function space
	$$S^Z(f)$$ 
to be the collection of real-valued functions $g$ such that $|Z^J g(t,x)| \ls_J |f|$
whenever $J$ is a multiindex. We will frequently use the upper bounding function $f = \jr^\x$ for some real $\x \leq 0$, where $\jr := (1 + |r|^2)^{1/2}$. We also define the radial subclass $S^Z_\text{radial}(f) := \{ g \in S^Z(f) : g \text{ is spherically symmetric}\}.$
	We denote
\begin{align}  \label{vf defn}
\begin{split}
\p_{J} &:= Z^J\p := \partial^i \Omega^j S^k u, \  \text{ if } J = (i,j,k)\\
\pa^{\le m} \p &:= (\pa^I \p)_{|I|\le m}, \ \text{ where } I \text{ is a 4-index}.%
\end{split}
\end{align}
We use the following $\bar\pa$ notation for the tangential derivatives
\begin{equation}
\label{def:tangential}\bar\pa := \{ \pat+\pa_{r}, \pao\}
\end{equation}
where $\pao$ denotes the angular derivatives. This is used in \cref{thm:iteration}. 

The usual local energy decay estimate is as follows in \cref{LED}. Before we state the estimate, we define the $\leo$ norm. Let
$$A_R := \{ x\in\rt:R \le |x|<2R\} \ \ (R>1), \qquad A_{R=1} := \{ |x|<2 \}.$$
Given a subinterval $I$ of $\R^+$,
\begin{equation}\label{initial.LE.def}
\begin{split}
 \| \phi\|_{LE(I)} &:= \sup_R  \| \la r\ra^{-\frac12} \phi\|_{L^2 (I\times A_R)},\\
  \| \phi\|_{LE^1(I)} &:= \| \de \phi\|_{LE(I)} + \| \la r\ra^{-1} \phi\|_{LE(I)},\\
 \| f\|_{LE^*(I)} &:= \sum_R  \| \la r\ra^{\frac12} f\|_{L^2 (I \times A_R)}.
\end{split} 
\end{equation}
Higher-order versions of \cref{initial.LE.def} are as follows:
\[
\begin{split}
  \| \phi\|_{LE^{1,k}(I)} &= \sum_{|\alpha| \leq k} \| \partial^\x \phi\|_{LE^1(I)} \\
  \| \phi\|_{LE^{0,k}(I)} &= \sum_{|\alpha| \leq k} \| \partial^\x \phi\|_{LE(I)},\\
  \| f\|_{LE^{*,k}(I)} &=  \sum_{|\alpha| \leq k}  \| \partial^\alpha f\|_{LE^{*}(I)}.
\end{split}  
\]
If the subinterval $I$ is omitted, then the norm will involve an integration over all $t\in \R^{+}$, where $\R^{+} := [0,\infty)$.

The following scale-invariant estimate on Minkowski backgrounds is well known and is called an integrated local energy decay estimate (or more simply, local energy decay estimate): we have
\begin{equation}\label{localenergyflat}
\|\pa \p\|_{L^{\infty}_t L^2_x} + \| \p\|_{LE^1}
 \ls \|\pa\p(0)\|_{L^2} + \|\Box \p\|_{LE^*+L^1_t L^2_x}
\end{equation}
and a similar estimate involving the $LE^1[t_0, t_1]$ and $LE^*[t_0, t_1]$ norms.
	See \cite{M} for the case of the Klein-Gordon equation, and see the following works in the case of small perturbations of the Minkowski space-time: see for instance \cite{KSS}, \cite{KPV}, \cite{SmSo}. Even for large perturbations, in the absence of trapping, \eqref{localenergyflat} still sometimes holds, see for instance \cite{BH}, \cite{MST}.
In the presence of trapping, \eqref{localenergyflat} is known to fail, see \cite{Ral}, \cite{Sb}.

We will assume an estimate similar to, but weaker than, \cref{localenergyflat} for our operator $P$ after commuting with \textit{only a finite number of derivatives} (as opposed to vector fields $Z$). In particular we do not assume that we can control the time derivative on the left-hand side---see \cref{eq:SILED}, which is the weaker estimate that we assume. 

 For sake of comparison, we state the usual ILED estimate, see \cref{eq:LED}, but we do not assume \cref{eq:LED} in the present article. For the Kerr spacetime with large $a$, the estimate \cref{eq:LED} holds for the homogeneous wave equation (by this we mean that in the notation of \cref{eq:LED} we have $P\p=0$): see Theorem 3.2 in \cite{DRS}.
 	The reader is encouraged to compare \cref{eq:LED} with the base case \cref{localenergyflat}.
\begin{definition}[ILED] \label{LED}
We say that $P$ has the integrated local energy decay
property if the following estimate holds for a finite number $m \geq 0$, and for all $0\leq T_0<T_1 \leq \infty$:
\begin{equation}
 \| \pa^{\le m} \p\|_{LE^{1}([T_0,T_1)\times\rt)} 
\ls_m  \|\pa \p(T_0)\|_{H^m(\rt)} + \|\pa^{\le m}(P\p)\|_{(\lolt + LE^*)([T_0,T_1)\times\rt)}
\label{eq:LED}\end{equation}
where the implicit constant does not depend on $T_0$ and $T_1$.
\end{definition}

We shall assume that the problem \cref{eq:problem} satisfies the following weaker version of the uniform boundedness of the energy, where we allow losses on the right hand side:
\begin{definition}[Weak energy bounds] \label{def:EB}
	 		We will assume that \eqref{eq:problem} satisfies the following estimate: there exists some $k_0 \in \N$ such that for finitely many $m \in\N$,
\begin{equation}\label{EB}
 \| \pa\p(T_{1}) \|_{H^{m}( \rt)} 
\ls_m  \|\pa\p(T_0)\|_{H^{m+k_{0}}(\rt)}, \quad 0 \le T_{0} \le T_{1}.\footnote{The assumption \cref{EB} is used in \cref{pdb1}.}
\end{equation}
\end{definition}

Given a norm $\| \|$, let $\|(f_1,\dots,f_n)\| := \sum_1^n \|f_j\|.$ We will assume that the equation \cref{eq:problem} satisfies the following estimate:
\begin{definition}[SILED]\label{def:SILED}
We say that $P$ has the stationary integrated local energy decay property if the following estimate holds for some $m \ge 0$, and for all $0 \le T_0 < T_1 \le\iy$:
\begin{align}\label{eq:SILED}
\begin{split}
 \| \pa^{\le m}\p\|_{LE^1([T_0,T_1)\times\rt)} 
&\ls_m  \|\big(\pa \p(T_0), \pa\p(T_{1})\big)\|_{H^{m}(\rt)} + \|\pa^{\le m}(P \p)\|_{ LE^*([T_0,T_1)\times\rt)} \\
&\qquad + \|\pa^{\le m}(\pat\p)\|_{LE([T_0,T_1)\times\rt)}.
\end{split}
\end{align}
\end{definition}

\begin{remark}[The estimate \cref{eq:SILED} holds for the problem \cref{eq:problem}] We note that \cref{eq:SILED} is a statement that an integrated local energy decay estimate holds for the \textit{linear problem} involving either the Schwarzschild or Kerr metric. In the present article we shall show how to control the quasilinearity after it has been placed into the dual local energy, or $LE^*$, norm. 

While the articles \cite{LinToh1,LinToh2} proved what is known as a weak ILED estimate for the equation \cref{eq:problem}, we choose to work with \cref{eq:SILED} instead; \cref{eq:SILED} holds for the equation \cref{eq:problem}, with an easier proof than the proof of weak ILED in \cite{LinToh1,LinToh2}. %

	 In general, SILED is far easier to prove than weak ILED if $\pat$ is timelike near the trapped set (and it is indeed timelike for the well-known applications of this estimate, such as perturbations of the Kerr spacetime).  
		The main obstruction in proving such ILED-type estimates on black hole spacetimes are trapping-related issues, and SILED does not even require decay in time of the metric near the trapped set; see Theorem 4.3 in \cite{MTT} for a demonstration of this claim.
		
 By contrast, proving weak ILED at present requires some decay in time of the metric $g$ toward $g_{B} \in \{ g_{\text{Schwarzschild}}, g_{\text{Kerr}} \}$; in \cite{MTT} they assume $\eps t^{-1-\x}$ ($\x>0$ arbitrarily small) decay while this was improved to an assumption of only $\eps t^{-1/2}$ decay in the articles \cite{LinToh1} and \cite{LinToh2}, but the latter is nonetheless still an assumption of time decay. %
\end{remark}

See also \cref{app} for how the change to normalised coordinates mentioned earlier affects the ILED and energy norms.

\subsection*{%
Pointwise bound assumptions on $\p$ and $H$, and thus on the metric %
}

Let $N$ be a sufficiently large number that equals the number of vector fields we assume on the initial data. 
	Let $N_{1}\le N/2 + 2$.\footnote{This particular  value of $N_1$ is merely an artifact of the proof in \cite{LinToh1,LinToh2}, and we note that its value can be changed to $N_1 := N - j$ for some $j \in \N$ without difficulty.}

An immediate consequence of the finiteness of the $\leo$ norm (which follows from any of various ILED-type estimates such as \cref{eq:SILED}) is
\begin{equation}\label{assu}
\sum_{J:|J| \le N_{1}} |\p_{J} | \ls  \la t - r^{*} \ra^{1/2}\jt\inv, \qquad \sum_{J:|J| \le N_{1}} |\pa \p_{J}|  \ls  \jr\inv \la t-r^{*} \ra^{-1/2}. 
\end{equation} 
The authors in \cite{LinToh1,LinToh2} demonstrated finiteness of the $\leo$ norm given the assumption of sufficiently small initial data that are smooth and compactly supported. 
	\, In fact, by smallness of the initial data we have $\eps$ factors on the right-hand sides of \cref{assu}, but we do not need this smallness in the estimates \cref{assu} in the present article. 

Regarding assumptions on the functions $\Hab(t,x)$, for all our propositions, lemmas and theorems in the present article we assume only that for some finite number $N$,
\begin{align}\label{met.ass}
\begin{split}
\sum_{J: |J| \le N}|H_J|&\ls 1, \quad  \sum_{J: |J| \le N} |H^{\Lbar\Lbar}_{J}|\ls \ju^{\xd}\jt^{-\xd}\\
\sum_{J: |J| \le N} |\pa H_J| &\ls \jt/(\jr\ju).
\end{split}
\end{align}See also \cref{app} (the appendix) for more context to the assumption on $H^{\Lbar\Lbar}$. 
We are able to assume a weaker but more complicated-looking assumption on $\pa H_{\le N}$, but for simplicity of presentation we simply assume that $\pa H_{\le N}\ls \jt/(\jr\ju)$. %

The proofs of global existence for \cref{eq:problem} in \cite{LinToh1} and \cite{LinToh2} for the case that $g_B$ equals the Schwarzschild and Kerr metric respectively make further assumptions on the metric coefficients, beyond \cref{met.ass,assu}. We refer the reader to those articles for the complete list of assumptions. 
	Here we only state assumptions that we need for our pointwise decay result in \cref{thm:main}, namely only \cref{met.ass,assu}. %

We now state our main theorem:

\begin{theorem}\label{thm:main}
\begin{enumerate}
\item (The exact Schwarzschild and Kerr metrics) Let $\phi$ denote the unique global solution of \cref{eq:problem} with small, smooth and compactly supported initial data with $g_B$ equal to either the Schwarzschild or Kerr metric, and %
with the assumptions \cref{met.ass,assu,EB,eq:SILED}.
	Then the following bound holds in normalised coordinates \cref{P def}%
\begin{equation}\label{eq:main bound}
\sum_{J: |J|=0}^m |\p_J(t,x)| \ls \f1{\la t+|x|\ra\la t-|x|\ra^{\xk}},  \quad \xk := \min(\xd,1), \quad |x| > r_e.
\end{equation}
This implies that for bounded $|x|$ with $|x| >r_e$, we have
$$|\p(t,x)| \ls \jt^{-1-\min(\xd,1)}.$$

\item (More general asymptotically flat spacetimes)
Assume that global existence holds for solutions to $$P'\p = f(\pa^2\phi,\phi,t,x)$$ %
where
\begin{equation}
\label{P' def}P' \in \Box + s_{2+}^\xo \Delta_\xo + \paa s_{1+}^{\x\xb} \pab + s_{2+} + s_{1+}^\x \paa
\end{equation}which describes more general asymptotically flat spacetimes than $P$ does, 
where we use the summation convention and the functions $s_\eta = s_\eta(t,x)$ satisfy $s_\eta \in S^Z(\jr^{-\eta}), \eta \in \{ 1+,2+ \}$, and we assume the coefficients of $\Delta_\xo$ are radial. Assume that \cref{met.ass,assu,EB,eq:SILED} hold with $P'$ replacing $P$ in \cref{eq:SILED}. 
	We also make the standard assumption that the metric described in \cref{P' def} is a smooth Lorentzian metric on $\R_t \times \R^3 \setminus B(0,r_0)$ for some $r_0>0$. %

Then the same bound \cref{eq:main bound} holds for these solutions, and thus as a consequence, we again have
$$|\p(t,x)| \ls \jt^{-1-\min(\xd,1)}, \quad |x| > r_e.$$
\end{enumerate} 
\end{theorem}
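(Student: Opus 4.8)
The plan is to prove the pointwise bound \cref{eq:main bound} by a weighted iteration scheme: starting from the weak decay $|\p_J| \ls \nm^{1/2}\jt\inv$ coming from \cref{assu}, I would successively upgrade the decay rate of $\p$ (and its $Z$-derivatives, up to some fixed but decreasing order) in both the interior region $\{r \ls t\}$ and the exterior region $\{r \gs t\}$, until the target rate $\jr\inv\nm^{-\xk}$ with $\xk = \min(\xd,1)$ is reached. The two main tools are: (i) a representation formula or energy-flux argument for $P$ (equivalently, for the Minkowski $\Box$ after treating $g^\xo\Delta_\xo$, $\paa s_2^{\x\xb}\pab$, $s_3$ and the curved-background corrections as perturbative source terms placed in $LE^*$ via \cref{eq:SILED}); and (ii) careful bookkeeping of the source term $f = \Hab\p\paa\pab\p + O(\p^2)\paa\pab\p$, using \cref{met.ass} — crucially the improved bound $|H^{\uL\uL}_J| \ls \ju^\xd\jt^{-\xd}$ on the ``bad direction'' component — together with the tangential-derivative gain: the $\uL\uL$ component multiplies $\pa_u\pa_u\p = (\pat-\pa_r)^2\p/4$-type terms, while all other contractions of $\paa\pab\p$ involve at least one good derivative $\tpa\p$, which decays better. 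This null-structure-type observation is what prevents the quasilinearity from obstructing the iteration.

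The key steps, in order, would be: First, set up the iteration hypothesis — assume $\sum_{|J|\le m_k}|\p_J| \ls \jr\inv\nm^{-\sigma_k}$ on the relevant region for some $\sigma_k \in [0,\xk)$ and some order $m_k$ — and quantify how many derivatives are lost per step (this is where \cref{EB} and the finite-order nature of \cref{eq:SILED} enter: each commutation with $Z$, especially the scaling field $S$, must be controllable, and $N$ is chosen large enough to absorb the total loss $N - N_1$ over the bounded number of iteration steps). Second, estimate $\|\pa^{\le m}f\|_{LE^*}$ and the related weighted norms of $f$ under the current iteration hypothesis: schematically $|f| \ls |H^{\uL\uL}||\p||\pa_u^2\p| + |H^{\text{good}}||\p||\tpa\pa\p| + |\p|^2|\pa^2\p|$, and one checks that the $\ju^\xd\jt^{-\xd}$ gain (for $\xd\le 1$) or an extra $\nm\inv$ gain from $\tpa$ beats the loss from quadratic-in-$\p$ structure, yielding a source with enough decay and weight to feed back into \cref{eq:SILED} and a fundamental-solution/Klainerman–Sobolev argument. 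Third, convert the improved $LE^1$/energy-flux bound into a pointwise bound on a slightly lower-order $\p_J$ via Klainerman–Sobolev-type inequalities adapted to the cone (interior) and via integration along characteristics plus the $\la t-r^*\ra$ weights (exterior), giving $\sigma_{k+1} > \sigma_k$ with a definite gap unless $\sigma_{k+1}=\xk$. Fourth, iterate finitely many times and patch the interior and exterior estimates across $\{r \sim t\}$. For part (2), the only change is that the more general perturbation $P'$ has coefficients $s_{1+}^{\x\xb}$, $s_{2+}$, $s_{1+}^\x$ with weaker spatial decay than in $P$; one checks these additional terms, when moved to the right side, still land in $LE^*$ with room to spare (the $1+$ and $2+$ rates are exactly what is needed), and the radiality assumption on the $\Delta_\xo$ coefficient is used so that commutation with $\xO$ is harmless — the rest of the argument is verbatim.

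The main obstacle I anticipate is \emph{closing the iteration at the endpoint} $\xd \le 1$, i.e.\ showing that the $H^{\uL\uL}$ term — the one genuinely slow-decaying piece of the quasilinearity — does not produce a logarithmic or rate loss that stalls the scheme short of $\jr\inv\nm^{-\xd}$. This requires the decay $\ju^\xd\jt^{-\xd}$ near the cone to be used sharply: one must track the $\nm$-power of $\p$ through the bilinear source $H^{\uL\uL}\p\,\pa_u^2\p$ and verify that, region by region (near the cone $\nm \ll t$, and away from it $\nm \sim t$), the resulting $LE^*$ weight closes with exponent exactly $\xk$ and not worse, for \emph{every} step of the iteration including the last. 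A secondary difficulty is the derivative/vector-field accounting across the interior–exterior interface and ensuring the scaling vector field $S$ (needed for the Klainerman–Sobolev step) can be commuted through $P$ without generating uncontrolled lower-order terms — here one leans on the $S^Z(\jr^{-j})$ structure of the coefficients in \cref{P def,P' def}, which is stable under $Z$.
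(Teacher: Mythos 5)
Your proposal matches the paper's strategy in all essentials: start from the weak $\la t-r^*\ra^{1/2}\jt\inv$ decay supplied by local energy bounds, use the positivity of the Minkowski fundamental solution (treating $P-\Box$ and the quasilinearity as sources placed in weighted $L^2$/$LE^*$) to get $\jr\inv$-type decay, use the SILED hypothesis to convert $\jr\inv$ into $\jt\inv$ in the interior (this is exactly \cref{thm:r-t} via \cref{lem:aux}), and exploit the null decomposition of $H^{\alpha\beta}\pa_{\alpha\beta}\p$ with the improved $\ju^\xd\jt^{-\xd}$ bound on $H^{\uL\uL}$ to close the iteration at exponent $\xk=\min(\xd,1)$. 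The derivative bookkeeping via \cref{2ndDeBd,derbound} and the finite-step structure are also as you describe.

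The one substantive piece of the paper's argument that your sketch glosses over is the decomposition of the source in \cref{final write}--\cref{decomp}: the metric-perturbation term near the cone is of the form $S^Z(\jr^{-2})\pa\p$, which by itself gains only $\ju\inv$ over $\p$ rather than $\jr\inv$; the paper fixes this by writing it as $\pa_t\bigl(\chi_{\mathrm{cone}}\,S^Z(\jr^{-2})\p\bigr)$ and invoking a separate lemma (\cref{lem:cone}) for $\Box\psi=\pa_t G$ that recovers an extra $\ju\inv$ from the boost identity $\ju\pa_t\widetilde\psi \ls |\pa\widetilde\psi|+|S\widetilde\psi|+|\xO\widetilde\psi|+\sum_i|(t\pa_i+x_i\pa_t)\widetilde\psi|$. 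Your "integration along characteristics plus $\la t-r^*\ra$ weights" gestures in this direction but does not explain how the near-cone metric term is promoted into an exact time derivative, which is what actually makes the last iteration step close. Also worth noting: your worry that the $\xd\le 1$ endpoint might stall is misplaced — in the paper the iteration is only two rounds long and the case that needs extra structure is $\xd\ge 1$, where one must replace $H^{\alpha\beta}\pa_{\alpha\beta}$ by $H^{\uL\uL}\uL^2 + S^Z(1)\bar\pa\pa$ and observe via \cref{tang.prop} that the tangential derivative already supplies a full factor of $\ju/\jt$, so no further gain beyond $\xk=1$ is available.
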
%

\subsection{Outline of the paper} \label{ss:outline}

In \cref{sec:notation} we define notation that is used throughout the article.
In \cref{sec:fromLEDtoptw} we state initial pointwise estimates. 
	In \cref{sec:preliminaries} we prove \cref{conversion}, which is the core result in our iteration scheme. This lemma adopts the perspective of rewriting the equation in order to control $\p=\Box_\text{Mink}\inv g$, where 
\begin{itemize}
\item
$g = g(P-\Box_\text{Mink}
	,\p, \pa^2\p)$ has coefficients (see \cref{P def} and \cref{met.ass}) that decay more rapidly %
than the coefficients of the Minkowski metric,
\item
and for the terms involving $\p$ or derivatives of $\p$ in $g$, the aforementioned \cref{sec:fromLEDtoptw} gives us starting pointwise estimates for $\p$ and derivatives of $\phi$. 
\end{itemize}That is, $g$ obeys good estimates. 
	Using $\Box_\text{Mink}\inv$ in this way allows us the simplification of integrating on a smaller set than mere finite speed of propagation would allow for. %
	Hence by using $\Box_\text{Mink}\inv$ we can integrate these good estimates that $g = g(\text{coefficients}, \p, \pa^2\p)$ obeys on a ``small'' set $D$ (see \cref{def:Dtr} for this set), and in $D$ we are able to avoid integrating in a ``small $r$ and small $t$'' region. Thus in our integrations, small $r$ values are paired with large $t$ values\footnote{and small $t$ values are paired with large $r$ values (by this we mean that, say, $r > 3t/2$), but this does not appear in this problem because we assume compactly supported initial data, as is clear after taking into account finite speed of propagation for the equation.}.  This perspective was also adopted in previous works such as \cite{L,Loo22.II,MTT}. 

The statement \cref{eq:SILED} can be thought of as saying that the spatial portion of the operator $P$ (by this we mean the terms of $P$ that remain if we ``delete'' all terms that have either a single time derivative $\pat$ or two time derivatives $\pat^2$)
 is invertible in the local energy norm. 
 	In \cref{sec:propag} we use the assumption \cref{eq:SILED} to propagate pointwise decay proved in the region $\{ r \ge t/2\}$ (this uses the tools in \cref{sec:preliminaries}) into the interior region $\{ r \le t/2\}$. 
	That is to say, if $\p$ obeys the bounds $\p \ls \jr\inv F$ for some function $F = F(|t-r|)$, we are able to use \cref{eq:SILED} to prove the bounds $\p \ls \jt\inv F$. 

In \cref{sec:iter} we prove the final decay rate for $\phi$ and its vector fields in the region inside of the light cone, that is, $\{ r \leq t\}$, which thereby completes the proof because of finite speed of propagation. In \cref{app} we provide an appendix giving more details about the Schwarzschild and Kerr metrics and we give more context to the assumption on $H^{\Lbar\Lbar}$; we also remark that the ILED and energy norms are minimally affected by the change to normalised coordinates.

\section{Notation}\label{sec:notation}

\subsection*{Notation for dyadic numbers and conical subregions}
We work only with dyadic numbers that are at least 1. We denote dyadic numbers by capital letters for that variable; for instance, dyadic numbers that form the ranges for radial (resp. temporal and distance from the cone $\{|x|=t\}$) variables will be denoted by $R$ (resp. $T$ and $U$); thus $$R,T, U\ge 1.$$ 
	We choose dyadic integers for $T$ and a power $a$ for $R,U$---thus $R = a^k$ for $k\ge1$--- different from 2 but not much larger than 2, for instance in the interval $(2,5]$, such that for every $j\in\N$, there exists $j'\in\N$ with 
$a^{j'} = \f38 2^j.$
 
\subsubsection*{Dyadic decomposition}
We decompose the region $\{r\le t\}$ based on either distance from the cone $\{r=t\}$ or distance from the origin $\{r=0\}$. We fix a dyadic number $T$. 
\begin{align*}
C_T &:= \begin{cases}
\{ (t,x) \in [0,\iy) \times \rt : T \leq t \leq aT, \ \ r \leq t\} & T>1 \\
\{ (t,x) \in [0,\iy) \times \rt : 0 < t < a, \ \ r \leq t\} & T=1
\end{cases} \\
C^R_T &:=\begin{cases}
C_T\cap \{R<r<aR\} & R>1\\
C_T\cap \{0 < r < a\} & R=1
\end{cases}\\
C^U_T &:=\begin{cases}
\{ (t,x) \in [0,\iy) \times \rt  :  T\le t\le aT\} \cap \{U<|t-r|<aU\} & U>1\\
\{ (t,x) \in [0,\iy) \times \rt  :  T\le t\le aT\} \cap \{0< |t-r|<a\} & U=1
\end{cases} %
\end{align*}
	We define
\begin{equation*}
C_T^{<3T/4} := \bigcup_{R < 3T/8} C_T^R.
\end{equation*}

	$C_T^R, C_T^U$ are where we shall apply Sobolev embedding, which allows us to obtain pointwise bounds from $L^2$ bounds. 
Given any subset of these conical regions, a tilde atop the symbol $C$ will denote a slight enlargement of that subset on its respective scale; for example, $\crtt$ denotes a slightly larger set containing $\crt$.

\subsection{Notation for the symbols $n$ and $N$} \label{subsec:N}
Throughout the paper the integer $N$ will denote a fixed and sufficiently large positive number, signifying the highest total number of vector fields that will ever be applied to the solution $\p$ to \eqref{eq:problem} in the paper. 

We use the convention that the value of $n$ may vary by line.%

If $\xS$ is a set, we shall use $\ti\xS$ to indicate a slight enlargement of $\xS$, and we only perform a finite number of slight enlargements in this paper to dyadic subregions. The symbol $\ti\xS$ may vary by line. 

\subsection*{Summation of norms}\label{conv:normsum}
\begin{itemize}
\item
We write 
$$\|(f_1,\dots,f_n)\| := \sum_1^n \|f_j\|.$$
\item
Recall the subscript notation \cref{vf defn} for vector fields. Let $\| \cdot\|$ be any norm used in this paper. Given any nonnegative integer $N\ge0$, we write $\|g_{\le N}\|$ to denote $\sum_{|J|\le N} \|g_J\|$. For instance, taking the absolute value as an example of the norm, the notation $|\pm(t,x)|$ means
$$|\pm(t,x)| = \sum_{J : |J| \leq m} |\p_J(t,x)|.$$
\end{itemize}

\subsection*{Other notation}
If $x =(x^1,x^2,x^3)\in\R^3$, we write 
\begin{align*}
u:= \f{t-r}2,  \quad v := \f{t+r}2.
\end{align*}
We write $\Box := -\pat^2+\Delta$.

\begin{definition}[Backward light cone]\label{def:Dtr}

Let $D_{tr}$ denote   
	\[
	D_{tr} := \{ (\rho,s)  \in \R_+^2: -(t+r) \leq s-\rho\leq t-r, \ |t-r| \leq s+\rho \leq t+r\}.
	\] 
When we work with $D_{tr}$ we shall use $(\rho,s)$ as variables, and $D_{tr}^{ R}$ is short for $D_{tr}^{\rho\sim R}$.
Thus for $R>1$, let 
\begin{equation*}
\co:= D_{tr} \cap \{ (\rho,s) : R < \rho<2R\}
\end{equation*}
and let 
$$D^{R=1}_{tr} := \dtr\cap\{(\rho,s) : \rho<2 \}. $$
We write $dA := dsd\rho.$

Let 
$$\calR_1:= \{ R : R < (t-r)/8 \}, \quad\calR_2 := \{ R : (t-r)/8 < R < t+r \}.$$ The partition $\calR_{j}$ will be used in the context of the pointwise decay iteration located in \cref{sec:iter}, which uses an integration over the backward light cone $D_{tr}$.
\end{definition}

\section{Initial pointwise estimates}\label{sec:fromLEDtoptw}

In this section we will show that local energy decay bounds imply certain slow decay rates for the solution, its vector fields, and its derivatives---see \cref{inptdcExt,derbound}.

The following pointwise estimate for the second derivative in \cref{2ndDeBd} (this was proved in \cite{LinToh1}) will be used, for instance, when applying \cref{DyadLclsd} to the functions $ w = \pa\pm$ (that is, when we bound the first-order derivatives pointwise); this will be done in \cref{derbound}. 

Let $R_{1}>0$ be a large number. %
This number satisfies the property that one multiplies the equation with the Morawetz multiplier only in $r\ge R_{1}$. We prove the following lemma only in %
$r \ge 2R_{1}$. %

See also \cite{L,Loo22.II,LooToh22,Loo22} for other proofs of this result in a very similar setting. We sketch the main points of the proof in this setting, but see Lemma 6.3 in \cite{LinToh1} for a more complete proof. 

\begin{lemma}\label{2ndDeBd}Assume that $\p$ is sufficiently smooth. 
For any point $(t,x)$ with $|x| \ge 2R_{1}$ we have 

 \begin{equation}\label{2ndD}
 \pa^2\p_J \ls
\mu\inv |\pa\p_{\le|J|+1}| + \mu\inv\jr\inv |\p_{\le |J|+2}| %
\quad \mu :=\min(\jr,\ju).
 \end{equation}
\end{lemma}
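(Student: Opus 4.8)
The plan is to isolate $\pa^2\p_J$ from the equation $P\p_J = (\text{commutator terms}) + f_J$ and to exploit the structure of $P$ in the region $r \ge 2R_1$, where $P = \Box + g^\xo \Delta_\xo + \paa s_2^{\x\xb}\pab + s_3$ with the $s_j$ decaying like $\jr^{-j}$ and $g^\xo$ like $\jr^{-3}$. First I would commute $Z^J$ through $P$: since $Z = (\pa,\Omega,S)$ and $[\Box, S] = 2\Box$, $[\Box,\Omega]=0$, while the commutators of the perturbative part of $P$ with $Z$ again produce terms of the same (or better) decay type acting on $\p_{\le|J|}$, one gets $P\p_J = \sum c \, (\text{good coefficients})\, \pa^{\le 2}\p_{\le|J|} + Z^{\le|J|}f$. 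The key algebraic point is the classical identity expressing the flat d'Alembertian in terms of the scaling and rotation vector fields away from the light cone and the origin: for $r$ bounded away from $0$ and $t$ bounded away from $r$, one can write $\pa^2$ (all flat second derivatives) as a combination $\mu^{-1}(\text{first-order }Z\text{-derivatives}) + (\text{angular/ scaling second-order pieces})/\mu + \Box$-type terms, where $\mu = \min(\jr,\ju)$; this is exactly where the weight $\mu^{-1}$ and the gain of one vector field order enter.

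Concretely, the main step is the pointwise elliptic-type estimate near the cone. In $\{r\ge 2R_1\}$ one introduces null-frame derivatives $\Lbar = \pat - \pa_r$, $L = \pat+\pa_r$, $\pao$. The ``bad'' second derivative is $\Lbar\Lbar\p_J$ (or $\pat^2\p_J$), and one recovers it from the equation: $\Box\p_J = -\Lbar L \p_J + \lapp\p_J + (\text{lower order})$, so $\Lbar L\p_J = -P\p_J + g^\xo\Delta_\xo\p_J + \paa s_2^{\x\xb}\pab\p_J + s_3\p_J + \lapp\p_J$. Then $\Lbar\Lbar\p_J = \Lbar(L\p_J) - \Lbar(\Lbar\p_J) + \dots$; more efficiently one uses $r\,\Lbar L\p = \Lbar(rL\p)+\dots$ and the fact that tangential derivatives $\bar\pa = \{L,\pao\}$ of $\p_J$ are controlled by $\jr^{-1}\p_{\le|J|+1}$-type quantities after accounting for the $\mu$ weight. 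Multiplying through by $r$ (or $\mu$) and using $|x|\ge 2R_1$ to keep $r \sim \jr$, one arrives at $\mu\,|\pa^2\p_J| \ls |\pa\p_{\le|J|+1}| + \jr^{-1}|\p_{\le|J|+2}| + \mu\,|f_{\le|J|}|$. The contribution of $f$ is absorbed since $f = \Hab\p\paa\pab\p + O(\p^2)\paa\pab\p$ is quadratic and, by the starting pointwise bounds \cref{assu}, is far smaller than the stated right-hand side in $r\ge 2R_1$; alternatively one simply notes $\mu f$ is of strictly lower order and can be bootstrapped away, or folded into $\jr^{-1}|\p_{\le|J|+2}|$.

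I expect the main obstacle to be bookkeeping the commutators and confirming that every term produced by $[Z^J, P]$ and by re-expressing $\Box$ in the null frame genuinely carries either the weight $\mu^{-1}$ with one extra vector field, or the weight $\mu^{-1}\jr^{-1}$ with two extra vector fields — in particular checking that the perturbative coefficients $g^\xo\in S^Z(\jr^{-3})$, $s_2\in S^Z(\jr^{-2})$, $s_3\in S^Z(\jr^{-3})$ never degrade the weight (they don't, since in $r\ge 2R_1$ we have $\jr^{-j}\le \jr^{-1}\le \mu^{-1}$ after the multiplication by $\mu$, using also $\mu \le \jr$). Since a complete argument is in Lemma 6.3 of \cite{LinToh1} and analogous statements appear in \cite{L,Loo22.II,LooToh22,Loo22}, I would only sketch these null-frame and commutator computations and refer there for the routine details.
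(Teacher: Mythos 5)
Your overall strategy --- isolate $\pa^2\p_J$ from the equation using Klainerman-type vector-field identities, then exploit the decay of the coefficients of $P-\Box$ in $r \ge 2R_1$ --- is the right one, and is essentially the route the paper takes, with the paper working in the $(\pa_r, S, \Omega)$ frame via $\pat = t\inv(S - r\pa_r)$ and $\pao \sim r\inv\Omega$ rather than the $(\Lbar, L, \pao)$ frame. Before getting to the substantive issue, note that your displayed step ``$\Lbar\Lbar\p_J = \Lbar(L\p_J) - \Lbar(\Lbar\p_J) + \dots$'' cannot be right as written (it would give $2\Lbar^2 = \Lbar L$); what you need to recover the non-tangential double derivative from the d'Alembertian is precisely the $S$-identity $\Lbar = \tfrac{2}{u}S - \tfrac{v}{u}L$, which is where the $\ju\inv$ weight enters. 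The paper's choice to work with $\pa_r^2$ directly rather than $\Lbar\Lbar$ sidesteps this bookkeeping and yields $\pa_r^2\p \ls \ju\inv|\pa\p_{\le 3}| + t\ju\inv|\Box\p|$ cleanly.

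The real gap is in how you dispose of the quasilinear term $f$. You argue that $f$ is ``far smaller than the stated right-hand side'' by the starting bounds \cref{assu}, or can be ``bootstrapped away.'' But \cref{assu} bounds only $\p_J$ and $\pa\p_J$ (for $|J| \le N_1 \sim N/2$); it gives nothing on $\pa^2\p_J$ --- which is precisely what the lemma is trying to bound, and which appears inside $f = \Hab\p\,\pa^2\p + O(\p^2)\pa^2\p$. The paper's proof resolves this with a two-stage induction: first it proves the lemma for $|J|\le N/2$, where the factor $\pa^2\p_J$ appearing on the right of the Cauchy--Schwarz split is absorbed into the left using the smallness $\eps$ from \cref{assu}; then, for $|J| \le N$, the commuted product $\sum_{J_1+J_2=J} h_{J_1}\pa^2\p_{J_2}$ is split into the cases $|J_1|\le N/2$ (absorb the high-order $\pa^2\p_J$ factor by smallness) and $|J_2| \le N/2$ (use the already-established stage-one pointwise bounds on $\pa\p_{\le N/2}$ and $\pa^2\p_{\le N/2}$). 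Without naming this stage-one-then-stage-two structure, your sketch would circularly invoke $\pa^2\p$ bounds that are not yet available, so the appeal to \cite{LinToh1} Lemma~6.3 for ``routine details'' is hiding the main content of the argument rather than a peripheral computation.
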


\begin{proof}We assume the stationary ILED bound \cref{eq:SILED}.
We prove this claim with a two-step process, as indicated by items (1) and (2) below: first for multi-indices $|J| \le N/2$, and then for $|J| \le N$.
\begin{enumerate}
\item\label{Item1}
For $|J|\le N/2$, %

We begin with the case $|J|=0$. 
	Since $\pao \sim r\inv \xO$ and $\pat = t\inv(S - r\pa_{r})$, we have 
$$\pa^{2}\p \ls r\inv |\pa\p_{\le3}| + |\pa_{r}^{2}\p|.$$
We also have 
$$\pa_{r}^{2}\p \ls \ju\inv |\pa\p_{\le3}| + t\ju\inv |\Box\p|.$$
	We now transition the $\Box$ term into a $\Box_{g}$ term. We write $|\Box\p| \le |\Box_{g}\p| + |(\Box_{g}-\Box)\p|$. For simplicity of notation, we shall write $\Box_{B}:= \Box_{g_{B}}$. Note that 
\begin{align*}
|(\Box_g -\Box) \p| &\ls |(\Box_{g} - \Box_{B})\p| + |(\Box_{B} - \Box)\p| \\
	&\ls |h \pa^{2}\p| + |\pa h \pa\p| + r\inv |\pa \p_{\le1}|.
\end{align*}
	Thus by \cref{assu} we have 
$$t\ju\inv\Box\p \ls t\ju\inv|\Box_{g}\p| + \eps\ju^{-1/2}|\pa^{2}\p, tr\inv\ju^{-1/2}\pa\p| + t r\inv\ju\inv|\pa\p_{\le3}|.$$
Let $\mu := (r\ju)/t$. Thus
$$\pa_{r}^{2}\p\ls \mu\inv |\pa\p_{\le3}| + r\mu\inv \cdot |\Box_g\p|.$$
In conclusion,
$$\pa^{2}\p\ls \mu\inv |\pa\p_{\le3}| + r\mu\inv \cdot |\Box_{g}\p|.$$

The proof by induction for higher $0 < |J| \le N/2$ is similar, and also uses \cref{assu}. 

\item %
For $|J|\le N$, we now show
\begin{equation}
\pa^{2}\p_{J}\ls \mu\inv |\pa\p_{\le|J|+3}| + \mu^{-2}|\p_{\le|J|}| + r\mu\inv |( \Box_{g}\p)_{J}|.
\end{equation}

Let 
	$$W^{\xb}:= |g_B|^{-1/2}\paa(\gab_B |g_B|^{1/2} )- |g|^{-1/2}\paa(\gab |g|^{1/2} ).$$
Let 
$$R_{1}^{*} := r^{*}(R_{1}). $$
A quick consequence of the assumptions is (see Lemma 5.2 in \cite{LinToh1}) that
\begin{equation}
W_{J} \in S^{Z}(1) \pa\p_{J} + \text{Rem}_{J}
\end{equation}
where 
\begin{equation}
\label{RemBd}%
\text{Rem}_{J} \ls \begin{cases}
|\p_{\le J}| & r \le R_{1}\\
|\pa\p_{\le J} + (r^{-2} + t/(r \la t-r^{*}\ra) ) |\p_{\le J}| & r^{*} \ge R_{1}^{*}
\end{cases}
\end{equation}
where the $t/(r\la t-r^{*}\ra)$ bound is simply the upper bound assumed on $\pa \Hab$ in \cref{met.ass}. 
	We have 
$$(\Box_{g}\p - \Box_{B}\p)_{J} = \sum_{J_{1} + J_{2}=J} \hab_{J_{1}} \pa_{\x\xb}\p_{J_{2}} + \sum_{J_{1} + J_{2}=J} W^{\x}_{J_{1}} \paa \p_{J_{2}}.$$
Either $|J_{1}|\le N/2$ or $|J_{2}|\le N/2$: \begin{itemize}
\item
If $|J_{1}|\le N/2$, then by \cref{assu}, \cref{Item1} and \cref{RemBd}
$$h_{\le N/2} \ls \eps \ju^{1/2}\jt\inv, \quad W_{\le N/2}\ls \jr\inv\ju^{-\xd}$$
we have
$$t\ju\inv |\hab_{J_{1}} \pa_{\x\xb}\p_{J_{2}}| \ls \eps \ju^{-1/2}(|\pa^{2}\p_{J}| + |\pa^{2}\p_{\le |J|-1}|)$$
$$t\ju\inv |W^{\x}_{J_{1}} \paa \p_{J_{2}}| \ls \mu\inv |\pa \p_{\le|J|}|.$$
\item
If $|J_{2}|\le N/2$, then by \cref{Item1} above, 
$$\pa\p_{\le N/2}\ls \eps \jr\inv\ju^{-1/2}, \quad \pa^{2}\p_{\le N/2}\ls \eps \mu\inv \cdot r\inv \ju^{-1/2}$$
and hence
$$t\ju\inv |\hab_{J_{1}} \pa_{\x\xb}\p_{J_{2}}| \ls \eps \mu^{-2}\ju^{-1/2}|\p_{\le|J|}|$$
\begin{align*}
t\ju\inv |W^{\x}_{J_{1}} \paa \p_{J_{2}}| 
  &\ls \mu\inv \ju^{-1/2}|W_{\le |J|}|\\
  &\ls \mu\inv \ju^{-1/2} |\pa\p_{\le|J|}| + \mu^{-2} |\p_{\le|J|}|.
\end{align*}The final line follows form the $r^{*} \ge R_{1}^{*}$ part of \cref{RemBd}. 
\end{itemize}The conclusion now follows.%
\end{enumerate}
\end{proof}

\begin{remark}[More general asymptotically flat spacetimes]
\cref{2ndDeBd} also holds for operators $P'$ defined in \cref{P' def} with coefficients that have even slower decay rates than the one defined in \cref{P def}.

Similarly, all the other propositions, lemmas and theorems involving $P$ in this article also hold if $P$ is replaced by $P'$ but for the sake of simplicity of presentation, in the remainder of this article we mainly work with the operator $P$. The proofs for $P'$ are straightforward and can be found in \cite{Loo22.II} or \cite{L}. 
\end{remark}

\begin{remark}
Henceforth we shall make no more mention of the trapped set, and we note that all the results below still hold outside of a ball centred at $0 \in \R^3_x$ with only very minor modifications of the proofs.
	Below, even though we superficially work on $\R^3_x$, our proof implicitly takes place only in $\R^3_x \setminus B(0,r_e )$.
\end{remark}

The primary estimates that let us pass from local energy decay to pointwise bounds are contained in \cref{DyadLclsd}.
\begin{lemma}
\label{DyadLclsd}
Let $w\in C^4$,  $Z_{ij} :=S^i\xO^j$,  $\mu:= \la \min(r,|t-r|) \ra$, and $\calR \in \{\crt,\cut\}$. 
	Then we have  \begin{equation}\label{DyadLclsdBd}
 \| w\|_{L^\iy(\calR)} \ls \sum_{i\le 1,j\le 2} \f1{|\calR|^{1/2}}  \lr{  \|Z_{ij} w\|_{L^2(\calR)} +  \|\mu \pa Z_{ij} w\|_{L^2(\calR)} }.
\end{equation}
where we assume $1\ll U \le \f38 T$, $1\ll R \le \f38 T$ in the cases $\cut,\crt$ respectively, and $|\calR|$ denotes the measure of $\calR$. 
\end{lemma}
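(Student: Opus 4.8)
# Proof Proposal for Lemma~\ref{DyadLclsd}

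\textbf{Overall strategy.} The plan is to reduce the claimed pointwise bound on a dyadic conical region $\calR$ to a standard Sobolev embedding $H^2 \hookrightarrow L^\infty$ on a fixed reference domain, obtained by rescaling $\calR$ to unit size. The key point is that on these conical regions the vector fields $S$ and $\Omega_{ij}$ behave, after rescaling, like the ordinary coordinate derivatives that appear in the flat Sobolev inequality, \emph{except} in the direction transverse to the cone, where one only has the ordinary derivative $\partial$ but weighted by $\mu = \langle \min(r,|t-r|)\rangle$. So the two terms on the right-hand side of \cref{DyadLclsdBd} are exactly what a rescaled, anisotropic Sobolev embedding produces.

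\textbf{Step 1: Set up the rescaling.} Fix the region, say $\calR = \crt$ with $1 \ll R \le \tfrac38 T$ (the case $\cut$ is symmetric with $U$ in place of $R$). On $\crt$ we have $t \sim T$, $r \sim R$, and the angular variable ranges over (a piece of) $S^2$; the set has dimensions $\sim T$ in $t$, $\sim R$ in $r$, and $\sim R$ in each angular arclength direction, so $|\calR| \sim T R^2$. Introduce rescaled variables $\bar t = t/T$, $\bar r = r/R$, and keep the angular coordinates; this maps $\crt$ to a fixed reference region $\calR_0$ of unit size, independent of $T,R$. Here $\mu = \langle \min(r,|t-r|)\rangle$: on $\crt$ with $R \le \tfrac38 T$ one has $|t-r| \gtrsim T \gtrsim R \gtrsim r$, so $\mu \sim \langle r \rangle \sim R$ on this region (and, analogously, $\mu \sim U$ on $\cut$). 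Thus $\mu$ is essentially constant on each dyadic piece, which is what makes the rescaling clean.

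\textbf{Step 2: Translate vector fields into rescaled coordinates.} The scaling field $S = t\partial_t + r\partial_r$ is invariant under the rescaling up to the mismatch between the $t$- and $r$-scalings; more precisely $S = \bar t \,\partial_{\bar t} + \bar r\, \partial_{\bar r}$ acting in rescaled coordinates, so $S$ is a smooth, bounded, non-degenerate first-order operator on $\calR_0$. The rotations $\Omega_{ij}$ are already scaling-invariant and are smooth non-degenerate vector fields spanning the tangent space of $S^2$; on the reference region they are comparable to the coordinate angular derivatives. The remaining direction — the derivative transverse to the level sets of $(\bar t, \bar r)$ that is \emph{not} captured by $S$ — must be supplied by an honest derivative: writing $\partial_{\bar r} = R\,\partial_r$ and $\partial_{\bar t} = T\,\partial_t$, and using $R \le \tfrac38 T$ together with $\mu \sim R$ on $\crt$, any first-order rescaled-coordinate derivative is controlled by $\mu\,\partial$ plus $S$ and the $\Omega_{ij}$. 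Hence, for $k=0,1,2$, the rescaled-coordinate derivatives of $w$ up to order $k$ on $\calR_0$ are pointwise dominated by $\sum_{i\le 1,\,j\le 2}\big( |Z_{ij} w| + \mu |\partial Z_{ij} w|\big)$ evaluated in the original coordinates (one derivative's worth of $Z$'s and at most one $\mu\partial$, matching the two terms in \cref{DyadLclsdBd}).

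\textbf{Step 3: Apply Sobolev embedding and undo the rescaling.} On the fixed region $\calR_0 \subset \mathbb R \times \mathbb R \times S^2$ (a bounded Lipschitz domain, possibly after the standard trick of extending or covering by coordinate charts and using a partition of unity on the sphere), the embedding $H^2(\calR_0) \hookrightarrow L^\infty(\calR_0)$ gives $\|w\|_{L^\infty(\calR_0)} \lesssim \|w\|_{H^2(\calR_0)}$, with a constant depending only on $\calR_0$, hence uniform in $T,R$. Expressing the $H^2(\calR_0)$ norm via the rescaled-coordinate derivatives of Step 2, and then changing variables back ($d\bar t\, d\bar r\, d\omega = (TR^2)^{-1}\,dt\,dr\,r^2\,d\omega \sim |\calR|^{-1}\, dV$ up to the harmless $r^2 \sim R^2$ Jacobian factor which is absorbed), the $L^2(\calR_0)$ norms become $|\calR|^{-1/2}$ times the $L^2(\calR)$ norms in the original measure. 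Collecting terms yields exactly \cref{DyadLclsdBd}.

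\textbf{Main obstacle.} The genuine content — and the step I would be most careful about — is Step 2: verifying that on the dyadic region $\crt$ (resp. $\cut$), with the constraint $R \le \tfrac38 T$ (resp. $U \le \tfrac38 T$), the three vector fields $S, \Omega_{ij}$ together with the single weighted derivative $\mu\partial$ really do span all rescaled-coordinate directions with uniformly bounded coefficients, and that composing two of them introduces at most the stated loss (no $\mu^2$, and no more than $Z_{ij}$ with $i \le 1, j \le 2$). This is where the precise bookkeeping of weights lives: one must check that commutators like $[S,\Omega]$, $[\partial, S]$, and the passage from $\partial_{\bar r}$ to $\mu\partial$ do not generate uncontrolled factors, using $t\sim T$, $r \sim R$, $\mu \sim R$, and $R \lesssim T$ throughout. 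Everything else is routine rescaling and the classical Sobolev inequality.
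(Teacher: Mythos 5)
Your general strategy (rescale to a reference region, then use a derivative-based embedding for $L^\infty$) is in the same spirit as the paper's sketch, but two steps as written do not close, and you correctly flagged the danger zone. First, the embedding $H^2(\calR_0)\hookrightarrow L^\infty(\calR_0)$ on a $(1+3)$-dimensional region fails: you need $s>2$ there. The paper instead uses the \emph{anisotropic} combination of two one-dimensional fundamental-theorem-of-calculus arguments (one derivative each in the two non-angular directions) together with $H^2(S^2)\hookrightarrow L^\infty(S^2)$, which is exactly what produces the pattern $S^{\le1}\Omega^{\le2}$ with at most one $\mu\pa$ — rather than a full $H^2$ norm. Second, the claim that all rescaled-coordinate derivatives of order $\le 2$ are dominated by $\sum_{i\le1,j\le2}(|Z_{ij}w|+\mu|\pa Z_{ij}w|)$ is false: with $\pa_{\bar t}=T\pa_t$ and $\pa_{\bar r}=R\pa_r$, expanding $\pa_{\bar t}^2$ or $\pa_{\bar t}\pa_{\bar r}$ via $T\pa_t=\tfrac{T}{t}(S-r\pa_r)$ inevitably produces $S^2$ (not allowed, since $i\le 1$) and $(r\pa_r)^2\sim(\mu\pa)^2$ (not allowed, since the right-hand side carries only a single $\mu\pa$).

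The fix is to replace the linear rescaling $\bar t=t/T,\ \bar r=r/R$ by coordinates whose coordinate vector fields are already comparable to the allowed fields with \emph{bounded} coefficients and which \emph{commute}, so no squares appear in the mixed derivative. One convenient choice: $\sigma=\log t$, $\rho=r/t$, which gives $\pa_\sigma=S$ and $\pa_\rho=t\pa_r$ with $[\pa_\sigma,\pa_\rho]=0$. The $\rho$-interval then has length $\sim R/T$ (not unit length), but the anisotropic FTC inequality rescales correctly: the weight $(R/T)$ multiplying $\pa_\rho=t\pa_r$ gives exactly $\mu\pa_r$ on $\crt$, the mixed term becomes $(R/T)\,\pa_\sigma\pa_\rho=(R/T)\,t\pa_r S\sim\mu\pa\, S$, and the Jacobian produces the $|\calR|^{-1/2}$ prefactor. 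Coupling this 2D argument with $H^2(S^2)\hookrightarrow L^\infty$ yields precisely \cref{DyadLclsdBd}. So the structure of your proof is right, but both the choice of coordinates and the choice of embedding need to be adapted to the specific anisotropic shape of the right-hand side.
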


\begin{proof}
A change of coordinates into exponential coordinates results in $\calR$ being transformed into a region of constant size in all directions. Then one uses the fundamental theorem of calculus for the $s,\rho$ variables and Sobolev embedding for the angular variables. Finally, changing coordinates to return to the original region $\calR$ produces the $|\calR|^{-1/2}$ factor. 
\end{proof}

The following \cref{Hardy} will be used to prove \cref{inptdcExt}. 

\begin{lemma}\label{Hardy}
If $f\in C^1( [0,\iy)_t \times \R^3_x )$, then
\begin{equation}\label{con.Hardy}
\int_{t/2}^{3t/2} \f{f(t,x)^2}{\ju^2} dx \ls \int_{t/4}^{7t/4}|\pa_r f(t,x)|^2 dx + \jt^{-2} \left(\int_{t/4}^{t/2} f(t,x)^2 dx + \int_{3t/2}^{7t/4} f(t,x)^2 dx\right)
\end{equation}
\end{lemma}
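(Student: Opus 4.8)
\textbf{Proof plan for \cref{Hardy}.} The estimate \eqref{con.Hardy} is a one-dimensional (in the radial variable) Hardy inequality on the annulus $\{t/4 < r < 7t/4\}$, with the two boundary integrals at $r \sim t/4$ and $r \sim 7t/4$ playing the role of the boundary terms that appear after integration by parts. The plan is to fix $t$, work in polar coordinates $dx = r^2\,dr\,d\omega$, and reduce to a weighted one-dimensional inequality for $r \mapsto f(t,r\omega)$ for each fixed $\omega \in \S^2$, then integrate over $\omega$.

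First I would localize: choose a smooth cutoff $\chi(r)$ which is $1$ on $[t/2,3t/2]$, supported in $[t/4,7t/4]$, with $|\chi'| \ls \jt^{-1}$ and $|\chi| \le 1$. On the support of $\chi$ we have $r \sim t$, so $\ju = \la t - r\ra$ ranges over $[1, \sim t]$; write $g(r) := \chi(r) f(t,r\omega)$, which is compactly supported in $(t/4, 7t/4)$. The key one-dimensional identity is the ``$1/\ju$'' Hardy inequality: for $g$ supported in an interval containing $r=t$, one has, after writing $\pa_r\big(\tfrac{1}{t-r}\big) = \tfrac{1}{(t-r)^2}$ (away from $r=t$) and integrating by parts carefully around the singularity, or more cleanly by using the substitution suited to the weight $\la t-r\ra^{-2}$,
\begin{equation*}
\int \frac{g(r)^2}{\la t-r\ra^2}\,dr \ls \int |g'(r)|^2\,dr,
\end{equation*}
which is the standard Hardy inequality $\int_{\R} \frac{h(s)^2}{\la s\ra^2}\,ds \ls \int_\R |h'(s)|^2\,ds$ after translating $s = r-t$ and using that the weight $\la s\ra^{-2}$ is integrable near $s=0$, so no boundary term at the singularity is produced (the standard proof multiplies by the antiderivative of the weight and integrates by parts; here the antiderivative $-\arctan$-type function is bounded, and $h(\pm\infty)=0$). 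Then expanding $g' = \chi' f + \chi \pa_r f$ and using $|\chi'| \ls \jt^{-1}$ gives
\begin{equation*}
\int \frac{\chi^2 f^2}{\la t-r\ra^2}\,dr \ls \int \chi^2 |\pa_r f|^2\,dr + \jt^{-2}\int_{\supp \chi'} f^2\,dr.
\end{equation*}
Since $r \sim t$ on $\supp\chi$, the factor $r^2 \sim t^2$ is comparable on both sides and can be reinserted to convert $\,dr$ into $\,dx$; integrating over $\omega \in \S^2$ and noting $\supp\chi' \subset [t/4,t/2] \cup [3t/2,7t/4]$ yields exactly \eqref{con.Hardy}, with the left side controlled from below by the integral over $\{t/2<r<3t/2\}$ where $\chi=1$, and with $\ju = \la t-r\ra \gs 1$ making $\la t-r\ra^{-2} \ge$ (a constant times) nothing worse — actually we just use $\ju^2 = \la t-r\ra^2$ on this range since $\ju$ there is comparable to $\la t-r\ra$ up to harmless constants, or one replaces $\la t-r\ra^{-2}$ by $\ju^{-2}$ directly since $\la t-r\ra \le \ju$ always gives $\ju^{-2} \le \la t-r\ra^{-2}$, which is the direction we need for the left-hand side.

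The main obstacle is handling the weight $\la t-r\ra^{-2}$ near its singular point $r=t$ cleanly, since a naive integration by parts of $\int g^2/(t-r)^2$ produces a boundary term $[g^2/(t-r)]$ that is singular. The way around this is to not integrate by parts against $1/(t-r)^2$ directly but to use the genuine Hardy inequality on $\R$ with the bounded antiderivative of $\la s\ra^{-2}$ (namely $\arctan s$), for which the boundary contribution is $[\arctan(s)\,h^2]$ evaluated at $\pm\infty$, finite and in fact vanishing by compact support; the singularity is then simply absorbed because $\la s\ra^{-2}$ is locally integrable. A secondary minor point is bookkeeping the radial weight $r^2$ and the comparability $r \sim t$ on the relevant annulus so that passing between $\,dr$ and $\,dx$ costs only harmless constants, and checking that the cutoff derivative is supported precisely in the two thin shells appearing on the right-hand side of \eqref{con.Hardy}; both are routine once the cutoff is fixed as above.
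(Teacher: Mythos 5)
The paper provides no proof of this lemma (it is stated and immediately used), so there is no argument of the paper's to compare against; I will evaluate your proposal on its own merits.

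There is a fatal gap: the one-dimensional estimate $\int_\R \la s\ra^{-2} h(s)^2\,ds \ls \int_\R |h'(s)|^2\,ds$ for compactly supported $h$, which is the linchpin of your reduction, is \emph{false}. The standard Hardy inequality $\int_0^\infty s^{-2}h^2\,ds \ls \int_0^\infty |h'|^2\,ds$ depends essentially on the non-integrability of $s^{-2}$ at $s=0$ (forcing $h(0)=0$); replacing $s^{-2}$ by the bounded weight $\la s\ra^{-2}$ removes that mechanism and leaves nothing to control the size of $h$ away from the endpoints. A concrete counterexample: take $h_n$ smooth, $h_n\equiv 1$ on $[-n,n]$, supported in $[-2n,2n]$, with $|h_n'|\ls n^{-1}$; then the left side is $\ge \int_{-n}^n \la s\ra^{-2}\,ds \to \pi$, while the right side is $\ls n^{-1}\to 0$. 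Your sketch via $W(s)=\arctan(s)$ does not rescue this: after integrating by parts one is left with $-2\int h\,h'\,W\,ds$, and since $|W|$ is only bounded (not $\ls \la s\ra^{-1}$), Cauchy--Schwarz gives $\ls\|h\|_{L^2}\|h'\|_{L^2}$, which cannot be absorbed because $\|h\|_{L^2}$ is not controlled by $\|h'\|_{L^2}$. If instead you choose the additive constant so that $W(\pm\infty)=0$ (to get $|W(s)|\ls\la s\ra^{-1}$ and make the absorption work), the integration by parts produces a boundary term $h(0)^2$, i.e.\ a term evaluated on the light cone $\{r=t\}$, which does \emph{not} match the boundary annuli $\{r\sim t/4\}$, $\{r\sim 7t/4\}$ appearing in \eqref{con.Hardy}.

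This failure is not merely an artifact of your route: testing \eqref{con.Hardy} directly with $f\equiv 1$, the left side is $\sim t^2$ (the unit-width shell around $\{r=t\}$ alone already contributes $\sim t^2$ since $r^2\sim t^2$ and $\ju\sim 1$ there), whereas the right side is $0 + \jt^{-2}\cdot O(t^3)\sim t$. So the statement as printed cannot be literally correct, and no amount of care with the cutoff and $r^2$ bookkeeping can repair a reduction to the false $\la s\ra^{-2}$-Hardy inequality. A correct version of the lemma should carry an additional boundary-type contribution near the light cone (or a modified weight), and the proof should produce it rather than suppress it.
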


The next proposition yields an initial global pointwise decay rate for $\phi_J$ under the assumption that the local energy decay norms are finite. We shall improve this rate of decay in future sections (see \cref{sec:iter}) for solutions to \eqref{eq:problem}, culminating ultimately in the final pointwise decay rate stated in the main theorem (\cref{thm:main}). 

\begin{proposition} \label{inptdcExt}
Let $T$ be fixed and $\p$ be any sufficiently regular function.
There is a fixed positive integer $k$, %
such that for any multi-index $J$ with $|J|\le N - k$, we have:
\begin{align}\label{u/v decay}
\begin{split}
|\p_{J}| \leq \bar C_{|J|} \|\p_{\leq |J|+k}\|_{LE^1[T, 2T]} \f{\ju^{1/2}}{\jv}. %
\end{split}\end{align} %
\end{proposition}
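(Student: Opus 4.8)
The plan is to obtain the pointwise bound by converting the finiteness of the $LE^1$ norm on the dyadic time slab $[T,2T]$ into a pointwise estimate via Sobolev embedding on conical subregions, exactly as packaged in \cref{DyadLclsd}. The key point is that the weight $\ju^{1/2}\jv^{-1}$ on the right-hand side of \cref{u/v decay} is dictated by the geometry: on a region $\calR = C_T^R$ or $C_T^U$ one has $|\calR|^{1/2} \sim T \cdot (\text{smaller scale})^{1/2}$, and $\jv \sim T$ in the interior $\{r \le t\}$, while the leftover small-scale factor after using the $LE^1$ norm produces the $\ju^{1/2}$. I would split the argument into the region near the light cone (distance from the cone comparable to a dyadic $U$) and the interior region (distance from the origin comparable to a dyadic $R$, with $R$ small compared to $T$), together with the trivial region $r \gtrsim t$ which is empty by finite speed of propagation and compact support of the data.

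First I would fix the dyadic time slab $T \le t \le 2T$ (replacing the $aT$ endpoint used in the body of the paper by $2T$ after relabelling constants; this is harmless). For a dyadic $R$ with $1 \le R \le \tfrac38 T$ I apply \cref{DyadLclsd} with $\calR = C_T^R$: since $|C_T^R| \sim T R^2$ (it is a slab of thickness $\sim T$ in time, $\sim R$ in radius, times the sphere), the estimate gives
$$
\|\p_J\|_{L^\infty(C_T^R)} \ls \frac{1}{(TR^2)^{1/2}}\Big( \|Z_{ij}\p_J\|_{L^2(C_T^R)} + \|\mu\, \pa Z_{ij}\p_J\|_{L^2(C_T^R)}\Big),
$$
summed over $i \le 1, j \le 2$. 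On $C_T^R$ one has $\mu = \langle \min(r,|t-r|)\rangle \sim R$ (because $r \sim R \le \tfrac38 T$ forces $|t-r| \gtrsim T \gtrsim R$), and $\langle r\rangle \sim R$. Hence $R^{-1}\|Z_{ij}\p_J\|_{L^2(C_T^R)} \le \|\langle r\rangle^{-1}\p_{\le|J|+3}\|_{L^2(C_T^R)}$ and $\|\mu\,\pa Z_{ij}\p_J\|_{L^2(C_T^R)} \ls R\|\pa\p_{\le|J|+3}\|_{L^2(C_T^R)}$; both are bounded by $R\,\|\p_{\le|J|+3}\|_{LE^1[T,2T]}$ after inserting the missing $\langle r\rangle^{-1/2}\sim R^{-1/2}$ weight that defines $LE^1$ and noting $\sum_R$ (or $\sup_R$) is controlled. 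Carefully tracking powers: the prefactor $T^{-1/2}R^{-1}$ times a contribution of size $R \cdot R^{1/2}\|\p\|_{LE^1}$ (the extra $R^{1/2}$ being the conversion between the plain $L^2(C_T^R)$ and the $LE$ norm which carries $\langle r\rangle^{-1/2}$) yields $T^{-1/2}R^{1/2}\|\p_{\le|J|+k}\|_{LE^1[T,2T]}$. Since on $C_T^R$ we have $\jv \sim T$ and $\ju \sim T$ as well when $R \le \tfrac38 T$... — here I need to be a little more careful, because $\ju \sim |t-r| \sim T$ only in the deep interior; what I actually want is $R^{1/2}T^{-1/2} \sim \ju^{1/2}\jv^{-1}\cdot(\text{something}\gtrsim 1)$, which holds because $\ju \gtrsim T \sim \jv$ in this region so $\ju^{1/2}\jv^{-1} \sim T^{-1/2} \ge R^{1/2}T^{-1/2}\cdot T^{-1/2}\cdot\dots$ — so in the interior the claimed bound is in fact weaker than what this computation gives, and that is fine.

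Next, for the region near the cone I apply \cref{DyadLclsd} with $\calR = C_T^U$, $1 \le U \le \tfrac38 T$, where $|C_T^U| \sim T^2 U$ and $\mu \sim U$. The same bookkeeping — Sobolev prefactor $(T^2U)^{-1/2}$, the $LE^1$ norm supplying a factor with an extra $\langle r\rangle^{-1/2}\sim T^{-1/2}$ since $r \sim T$ on this region, and $\mu \sim U$ absorbing one derivative weight — produces $\|\p_J\|_{L^\infty(C_T^U)} \ls U^{1/2}T^{-1}\|\p_{\le|J|+k}\|_{LE^1[T,2T]}$, which is exactly $\ju^{1/2}\jv^{-1}\|\p_{\le|J|+k}\|_{LE^1[T,2T]}$ since $\ju \sim U$ and $\jv \sim T$ there. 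Combining the two regimes over all dyadic $R$ and $U$ (using that for fixed $(t,x)$ only one dyadic value of each is relevant, so no logarithmic loss appears) and setting $k$ to be the fixed number of vector fields lost (here $k = 3$, the number needed to run $Z_{ij} = S^i\Omega^j$ with $i\le 1, j \le 2$, though the statement only needs *some* fixed $k \le N - |J|$) gives \cref{u/v decay}. I would note the region $r \gtrsim t$ is handled by finite speed of propagation: the data are compactly supported, so $\p$ vanishes there and the bound is trivial. Finally, \cref{Hardy} enters to control the $L^2$ norm of $\p_J$ itself (as opposed to its derivatives) by $\pa_r \p_J$ and boundary terms, which is what lets the raw $\|Z_{ij}\p_J\|_{L^2}$ appearing in \cref{DyadLclsdBd} be absorbed into $\|\p_{\le|J|+k}\|_{LE^1}$ without an independent zeroth-order bound; this is the one nontrivial input beyond direct weight-counting.

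The main obstacle I anticipate is not conceptual but bookkeeping: getting every power of the dyadic parameters $R$, $U$, $T$ to line up so that the Sobolev prefactor $|\calR|^{-1/2}$, the weight $\mu$ inside \cref{DyadLclsdBd}, and the $\langle r\rangle^{\pm1/2}$ weights hidden in the definitions of $LE$ and $LE^1$ combine to exactly $\ju^{1/2}\jv^{-1}$ — and in particular making sure that in the interior region one genuinely gets something at least as strong as the claimed bound (it is, comfortably), while near the cone one gets exactly the claimed bound with no room to spare. The use of \cref{Hardy} to close the zeroth-order term, and the verification that only finitely many dyadic scales contribute at a fixed point (so summation over $R$, $U$ costs nothing), are the two places where a careless argument would lose a logarithm or fail outright.
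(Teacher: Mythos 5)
Your approach is essentially the one the paper uses: apply \cref{DyadLclsd} on the dyadic conical regions $C_T^R$ and $C_T^U$, note that the interior region comes out with room to spare, and then use \cref{Hardy} in the wave zone to gain the missing factor. However, there is a concrete bookkeeping error in your volume computation that you yourself flagged as the likely failure point, and it propagates through the estimate. The regions $C_T^R$ and $C_T^U$ are four-dimensional, and their measures are $|C_T^R| \sim TR^3$ (a time slab of length $T$ times a spherical shell of radius and thickness $\sim R$, whose three-dimensional volume is $R^3$, not $R^2$) and $|C_T^U| \sim T^3U$ (time slab of length $T$ times a thin shell of radius $\sim T$ and thickness $\sim U$, three-dimensional volume $T^2 U$). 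You wrote $TR^2$ and $T^2U$, each missing a factor of the radius. With your volumes the chain $(T^2U)^{-1/2}\cdot T^{1/2}\cdot U$ gives $U^{1/2}T^{-1/2}$, not the $U^{1/2}T^{-1}$ you assert; the correct volume $T^3U$ produces the extra $T^{-1/2}$ and lands exactly on $\ju^{1/2}\jv^{-1}$. A quick sanity check against the paper: the paper records that the \emph{raw} \cref{DyadLclsd} bound in $C_T^U$ is $U^{-1/2}\|\pmn\|_{\leo}$, which is precisely $(T^3U)^{-1/2}\cdot T^{3/2}$ from the zeroth-order term $\|Z_{ij}\p_J\|_{L^2} \ls T^{3/2}\|\la r\ra^{-1}\p_{\le |J|+3}\|_{LE}$; your $T^2U$ would instead produce $T^{1/2}U^{-1/2}$, which does not match.

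One further clarification on where and why Hardy enters. You say it lets $\|Z_{ij}\p_J\|_{L^2}$ be absorbed into $\|\p_{\le|J|+k}\|_{LE^1}$ ``without an independent zeroth-order bound,'' but $\|\cdot\|_{LE^1}$ already carries the zeroth-order piece $\|\la r\ra^{-1}\phi\|_{LE}$. The real issue is the \emph{strength} of that weight. In $C_T^R$ one has $\la r\ra^{-1} \sim R^{-1} \gtrsim T^{-1} \sim \ju^{-1}$, so the $\la r\ra^{-1}$ weight built into $LE^1$ is at least as good as the $\ju^{-1}$ weight one wants, and \cref{DyadLclsd} alone closes. In $C_T^U$ the inequality reverses: $\la r\ra^{-1} \sim T^{-1} \ll U^{-1} \sim \ju^{-1}$, so the built-in weight is too weak; \cref{Hardy} is needed to trade the stronger $\ju^{-1}$ weight for a radial derivative, which \emph{is} controlled with the good $\la r\ra^{-1/2}$ weight in $LE^1$. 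So Hardy is needed only in the wave zone $C_T^U$, exactly as the paper states.
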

\begin{proof}
One uses \cref{DyadLclsd}, which proves \cref{u/v decay} except in the wave zone $\cut$. For $\cut$ we additionally use the tool of \cref{Hardy}, because bounding directly using only \cref{DyadLclsd} results merely in the insufficient decay 
$$\|\p_{J}\|_{\licut}\ls U^{-1/2}\|\pmn\|_{\leo[T,2T]}.$$
\end{proof}

\begin{lemma}\label{lem:Sob}
Given a function $f : \R^{3}\to \R$, we have 
$$\|f\|_{L^{\iy}(R < |x| < R+1)} \ls R\inv \|f_{\le 3}\|_{\lt(R - 1 < |x| < R+2)}.$$
\end{lemma}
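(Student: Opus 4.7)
The plan is a standard two-step Sobolev embedding: first in the angular variables on $S^2$, then in the radial variable on a unit-length interval.

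In the first step, for each fixed $r \in (R, R+1)$ I view $f(r, \cdot)$ as a function on $S^2$ and invoke the 2D Sobolev embedding $H^2(S^2) \hookrightarrow L^\infty(S^2)$, using that the rotation vector fields $\Omega$ generate, up to bounded factors, the intrinsic derivatives on the unit sphere:
\[
\|f(r, \cdot)\|_{L^\infty(S^2)}^2 \ls \sum_{|\alpha| \le 2} \|\Omega^\alpha f(r, \cdot)\|_{L^2(S^2)}^2.
\]

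In the second step I apply 1D Sobolev embedding in $r$ on the unit interval $[R, R+1]$, extending via a smooth cutoff to $[R-1, R+2]$, and combine with the angular step to get
\[
\|f\|_{L^\infty(R<|x|<R+1)}^2 \ls \sum_{|\alpha|\le 2}\int_{R-1}^{R+2}\!\left( \|\Omega^\alpha f\|_{L^2(S^2)}^2 + \|\pa_r \Omega^\alpha f\|_{L^2(S^2)}^2\right) dr.
\]
I then convert the volume element using $dx = r^2\, d\omega\, dr$ with $r^2 \gs R^2$ on the enlarged annulus $\{R-1 < |x| < R+2\}$, which produces the factor $R^{-2}$. Noting that $\pa_r = (x/r)\cdot \nabla_x$ is a bounded combination of the Cartesian derivatives $\pa_i$ on the annulus, the operator $\pa_r \Omega^{\le 2}$ applied to $f$ is controlled pointwise by $|f_{\le 3}|$. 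This yields
\[
\|f\|_{L^\infty(R<|x|<R+1)}^2 \ls R^{-2} \|f_{\le 3}\|_{L^2(R-1<|x|<R+2)}^2,
\]
and taking square roots gives the claim.

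The main (mild) obstacle is cleanly handling the cutoff in the 1D Sobolev step so that the integration naturally extends to the slightly enlarged interval $[R-1, R+2]$ appearing on the right-hand side of the lemma; in the small-$R$ case (e.g.\ $R=1$) the inequality reduces to a standard 3D Sobolev embedding on a fixed-size region where the $R^{-1}$ factor is merely an $O(1)$ constant, so no separate argument is needed there.
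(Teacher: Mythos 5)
Your proof is correct and follows exactly the route the paper sketches: a 2D Sobolev embedding on $S^2$ via the rotation vector fields combined with a localised 1D embedding in $r$, with the $R^{-1}$ factor coming from the volume element $r^2\,d\omega\,dr$ on the annulus. The paper gives only a one-line sketch of this same two-step argument, so your write-up is simply a fleshed-out version of it, including the correct observation that the small-$R$ case reduces to the standard 3D embedding on a fixed-size region.
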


\noindent \textit{Sketch of proof.} 
This standard result can be proven by combining a localised embedding and an embedding on $S^{2}$. \hfill $\Box$

We shall now state two pointwise bounds for the derivative, namely \cref{pdb1,derbound}, one an automatic consequence of the uniform energy bounds, and the other a consequence of bootstrapping from already-existing pointwise bounds on vector fields of the solution.

\begin{corollary} \label{pdb1}
We have\begin{equation}\label{cor:Sob}
\pa \pm \ls \jr\inv
\end{equation}where the constant depends on the initial data.
\end{corollary}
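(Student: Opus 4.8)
The plan is to deduce the bound \cref{cor:Sob} directly from the assumed energy estimate \cref{EB} --- applied on the interval $[0,t]$, so that its right-hand side is a fixed constant depending only on the data --- together with the fixed-time weighted Sobolev embedding of \cref{lem:Sob}. The first step is to record a $t$-uniform $L^2_x$ bound for vector fields of $\pa\p$: for every $t\ge 0$ and every multi-index $J'$ in a fixed finite range one has $\|\pa\p_{J'}(t,\cdot)\|_{L^2(\rt)}\ls\|\pa\p_{J'}(0,\cdot)\|_{H^{k_0}(\rt)}=:C_0<\iy$, where $C_0<\iy$ because $\p_{J'}(0,\cdot)$ is a smooth, compactly supported function of $x$ (any time derivatives that occur are converted, via \cref{transformed}, into spatial expressions in $(\p_0,\p_1)$). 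This is \cref{EB} applied to $\p_{J'}=Z^{J'}\p$ in place of $\p$, which is legitimate because $\p_{J'}$ solves an equation of the same structure as \cref{transformed}: the commutators $[P,Z]$ and the vector fields $Z$ applied to the quasilinearity produce only terms of the same or lower order, with coefficients obeying the bounds in \cref{P def} and \cref{met.ass}, so the standing hypotheses propagate to each $\p_{J'}$ --- this is the same commutation already underlying \cref{assu}.

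With Step 1 in hand, I would fix $(t,x)$ and a multi-index $J$ with $|J|\le m$ and bound $|\pa\p_J(t,x)|$ by a fixed-time Sobolev embedding in the $x$-variables. For $|x|\ls1$, where $\jr\inv\sim1$, a standard embedding on a fixed ball suffices. For $|x|\gs1$ one applies \cref{lem:Sob} with $R\sim|x|$ to $f=\pa\p_J(t,\cdot)$, which produces the decisive gain $|\pa\p_J(t,x)|\ls|x|\inv\|(\pa\p_J)_{\le3}(t,\cdot)\|_{L^2(|x|-1<|y|<|x|+2)}$. In either case the (at most three) extra spatial derivatives are commuted through $Z^J$; the point is that they increase only the total number of derivatives applied to $\p$, produce no weight in $\jr$ or $\jt$, and leave every resulting term of the form $\pa\p_{J'}$ (no undifferentiated $\p_{J'}$ appears), so the right-hand side is controlled by $C\,|x|\inv\sum_{|J'|\le|J|+3}\|\pa\p_{J'}(t,\cdot)\|_{L^2(\rt)}\ls C_0\,\jr\inv$ by Step 1. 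Summing over $|J|\le m$ yields $\pa\pm\ls\jr\inv$, valid for $m$ in the range for which \cref{EB} and these commutations are available (ensured by taking $N$ sufficiently large). This crude bound, carrying no decay in $t$, is all that is needed as an input in the subsequent sections.

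The one point requiring care is the treatment of the rotation and scaling vector fields implicit in $\pm$: converting $\xO$ and $S$ into coordinate derivatives would cost powers of $\jr$ and $\la t+r\ra$, which on the (compact-in-$x$, by finite speed of propagation) support of $\p$ are comparable to powers of $\jt$, and would therefore give a bound growing like $\jt^{\,m}$ rather than a $t$-uniform one. This is precisely why the argument is routed through the $Z$-commuted form of \cref{EB}, which controls each $\pa\p_{J'}(t,\cdot)$ in $L^2_x$ uniformly in $t$ without any such conversion; only the handful of coordinate derivatives coming from \cref{lem:Sob} ever get commuted through $Z^J$, and those are harmless. Accordingly, the main --- and essentially routine --- thing to verify is that commuting \cref{transformed} with $Z$ preserves the structural hypotheses \cref{P def} and \cref{met.ass}, so that \cref{EB} genuinely applies to each $\p_{J'}$.
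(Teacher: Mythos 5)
Your proposal is correct and is essentially the paper's own argument: the paper proves \cref{pdb1} in one line by invoking \cref{lem:Sob} together with the uniform energy bound \cref{EB}, which is exactly the combination you use. The one thing you do that the paper does not is spell out the commutation bookkeeping needed so that the bound applies to $\pa\p_J$ for $|J|\le m$ and not just to $\pa\p$ (since \cref{EB} as written controls only spatial Sobolev norms of $\pa\p$, while $Z^J$ includes $\Omega$ and $S$); the paper treats this as automatic, and your observation that one should apply the $Z$-commuted form of \cref{EB} rather than convert $\Omega,S$ into coordinate derivatives (which would cost $\jt$ weights) is a reasonable way to fill that gap, and is anyway the same mechanism that already underlies \cref{assu}.
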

\begin{proof}
This is an immediate corollary of \cref{lem:Sob} and the uniform boundedness of the energy \cref{EB}.
\end{proof}

\begin{lemma}[The derivative in $\lt$]\label{lem:derbd}
For $\calR\in\{\crt,\cut\}$ and for $R\gg1, U\gg1$ respectively, then
\begin{equation} \label{easy}
  \| \pa \p\|_{\lt(\calR)} 
  	\ls  \|\mu\inv \p_{\le1}\|_{\lt(\ti\calR)}.
\end{equation}

\end{lemma}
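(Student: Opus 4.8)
The plan is to reduce the $L^2$ bound on $\pa\p$ over a dyadic conical region $\calR$ to the pointwise second-derivative estimate \cref{2ndDeBd} combined with the first-order Hardy-type inequality \cref{Hardy}. First I would observe that on the region $\calR \in \{\crt, \cut\}$ with $R \gg 1$ (resp. $U \gg 1$), we are outside the trapped set and indeed in the region $r \ge 2R_1$, so \cref{2ndD} is available: $\pa^2\p \ls \mu\inv|\pa\p_{\le 1}| + \mu\inv\jr\inv|\p_{\le 2}|$, with $\mu = \min(\jr,\ju)$. The idea is that controlling $\pa\p$ in $L^2(\calR)$ is essentially controlling a full derivative of $\p$, and one gains the weight $\mu\inv$ by integrating by parts / using a Hardy inequality in the appropriate variable: in $\crt$ one integrates in the radial variable picking up $\jr\inv$, while in $\cut$ one integrates transverse to the cone picking up $\ju\inv = \la t-r\ra\inv$. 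Either way the natural weight produced is $\mu\inv$ on the right-hand side.

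The key steps, in order: (1) fix the dyadic region $\calR$ and work in the slab $T \le t \le aT$; split $\pa\p$ into the good (tangential) derivatives $\bar\pa\p = \{(\pat+\pa_r)\p, \pao\p\}$ and the bad derivative, say $\pa_r\p$ or $\pat\p$. The tangential derivatives are already morally $\mu\inv \p_{\le 1}$ after using $\pao \sim r\inv\xO$ and commuting, so they are harmless. (2) For the remaining derivative, run a Hardy inequality in the variable measuring distance to the relevant boundary of $\calR$: in $\cut$ this is $|t-r|$, and \cref{Hardy} (or its localized dyadic analogue on the slab) gives $\|\ju\inv f\|_{\lt} \ls \|\pa_r f\|_{\lt(\ti\calR)} + (\text{boundary terms at scale } T)$; apply this with $f = \p$, so that $\|\mu\inv\p\|_{\lt(\calR)}$ is bounded by $\|\pa\p\|_{\lt(\ti\calR)}$ — but this is the wrong direction, so instead I would apply Hardy with $f = \pa\p$ to get $\|\mu\inv \pa\p\|_{\lt}$ controlled by $\|\pa^2\p\|_{\lt(\ti\calR)}$, then feed in \cref{2ndD} to trade $\pa^2\p$ for $\mu\inv(\pa\p_{\le1} + \jr\inv\p_{\le2})$, and finally absorb the resulting $\|\mu\inv\pa\p\|_{\lt}$ term on the left using that $\mu\inv \ll 1$ on $\calR$ (this is exactly why we need $R \gg 1$, $U \gg 1$). (3) What survives after absorption is $\|\pa\p\|_{\lt(\calR)} \ls \|\mu\inv\p_{\le1}\|_{\lt(\ti\calR)} + (\text{lower-order remainders with extra } \jr\inv \text{ or } \mu\inv \text{ decay})$, and the remainders are dominated by the main term, giving \cref{easy}.

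The main obstacle I expect is the bookkeeping of which variable to run Hardy in for each of the two geometries ($\crt$ versus $\cut$) and ensuring the boundary terms generated at the dyadic scale — i.e. the contributions at $r \sim R$, $r \sim aR$ for $\crt$ and at $|t-r|\sim U$, $|t-r|\sim aU$ for $\cut$ — are themselves of the form $\|\mu\inv\p_{\le1}\|_{\lt(\ti\calR)}$ after the slight enlargement $\calR \rightsquigarrow \ti\calR$; this is where the freedom to enlarge the region is used. A secondary technical point is making the absorption step rigorous: one should first prove the estimate with $\pa\p$ replaced by a smooth truncation or argue on the enlarged region and iterate, so that the a priori finiteness of $\|\pa\p\|_{\lt(\ti\calR)}$ (which follows from $\p$ being sufficiently regular, as hypothesized) justifies moving the small term to the left. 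Everything else is a routine combination of \cref{2ndDeBd}, \cref{Hardy}, and the commutator identities $\pao \sim r\inv\xO$, $\pat = t\inv(S - r\pa_r)$ already used in the proof of \cref{2ndDeBd}.
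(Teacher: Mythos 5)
Your proposed route has a genuine directional flaw that makes the argument circular. The Hardy inequality \cref{con.Hardy} runs in the direction
\begin{equation*}
\|\mu\inv f\|_{\lt(\calR)} \ls \|\pa f\|_{\lt(\ti\calR)} + \text{(boundary)},
\end{equation*}
i.e.\ it bounds a \emph{weighted function} by a \emph{derivative}. What \cref{easy} asks for is the opposite: to bound a \emph{derivative} by a \emph{weighted function}. Applying Hardy to $f=\pa\p$ gives $\|\mu\inv\pa\p\| \ls \|\pa^2\p\|$, and feeding in \cref{2ndD} replaces $\pa^2\p$ by $\mu\inv|\pa\p_{\le1}| + \mu\inv\jr\inv|\p_{\le2}|$, so both sides acquire the \emph{same} weight $\mu\inv$: you are left with
\begin{equation*}
\|\mu\inv\pa\p\| \ls \|\mu\inv\pa\p_{\le1}\| + \|\mu\inv\jr\inv\p_{\le2}\|,
\end{equation*}
and the term $\|\mu\inv\pa\p\|$ appears on the right with an $O(1)$ (not small) constant, since neither Hardy nor \cref{2ndDeBd} contributes a gain. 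The condition $R,U\gg1$ makes $\mu\inv$ small, but that smallness cancels identically between the two sides; there is nothing to absorb. Moreover, your step (1) overstates what is free: the good derivative $(\pat+\pa_r)\p$ is controlled by $\jv\inv|S\p| + \tfrac{\ju}{\jv}|\pa\p|$, which still contains a full $\pa\p$ and is not of the desired form $\mu\inv\p_{\le1}$.

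The missing ingredient is that one must actually \emph{use the equation} $P\p=f$ in a positive-commutator/energy way, not merely invoke the pointwise elliptic-type estimate \cref{2ndDeBd}. The paper's proof multiplies the equation by $\chi w$ (a spacetime cutoff supported on $\crtt$), integrates by parts to convert $\int\chi(|\grad w|^2 - w_t^2)$ into $\int\chi w\Box w + O(|\Box\chi|w^2)$, and then exploits the pointwise algebraic identity
\begin{equation*}
\chi\tfrac{u}{t}|\de w|^2 \le \chi\bigl(|\grad w|^2 - w_t^2 + \tfrac{C'}{ut}|Sw|^2\bigr),
\end{equation*}
valid for $r<t$, to produce the weighted gradient on the left. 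The $u/t$ weight is exactly what yields the $\mu\inv$ gain after tracking dyadic sizes, and the nonlinear and perturbative pieces of $P$ are handled by Cauchy--Schwarz and absorption (using $R\gg1$, $U\gg1$ only to absorb the small perturbative coefficients such as $O(\jr\inv)$, not to absorb the principal term as in your proposal). So the correct mechanism for the gain is integration by parts against the equation, which your proposal omits.
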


\begin{proof} We only prove the case $\crt$ as the other two cases are similar; see also Lemma 5.1 in \cite{L}.
Let $\chi(t,r)$ be a radial cutoff function on $\R^{1+3}$ with $\supp\chi \subset \crtt$ and $\chi=1$ on $\crt$. 

Let $w$ be a function. Note that
\begin{enumerate}
\item 
If $r < t$ then for a sufficiently large constant $C'$, we have %
\begin{equation}\label{1f}
\chi \lr{ \f{u}t |\de w(t,x) |^2  }\le \chi\lr{   |\grad w |^2- w _t^2+ \f{C'}{ut} |S w |^2 }
\end{equation}
as an expansion of the terms $|Sw|^2, |\de w|^2$ reveals.%

\item Integrating by parts,
\begin{align}\label{ini.com}
\int \chi ( |\grad w |^2 -  w _t^2) \, dxdt
	&=  \int \chi w (\pat^2-\Delta) w \,dxdt - \int \f12 (\pat^2-\Delta)\chi  w ^2\, dxdt.
\end{align} There are no boundary terms in either time or space because of the compact support of $\chi(t,r)$ in both time and space. 
\end{enumerate}
 	Integrating \cref{1f} in spacetime, we have via \cref{ini.com}
\begin{align}\label{1g}
\int\chi \f{u}t |\de w |^2 \,dxdt
	&\le \int \chi w(\pat^2-\Delta) w + O(|\Box\chi| w ^2) + \f{C'}{ut}\chi |S w |^2 \,dxdt.
\end{align}
For 
\begin{equation}
\label{comeback}
\int (\chi w)(\hab_{\text{B}}\paab w) \,dxdt,
\end{equation} 
we integrate by parts and use Cauchy-Schwarz. A term
$$\int \chi \hab_{\text{B}} \paa w \pab w \,dxdt = O\lr{ \int \chi \f{ |\de w|^2}\jr \,dxdt }$$
arises, and for this term we use the hypothesis that $R \gg 1$.  For the other term that arises, 
$$\int \chi \pa h_{\text{B}} w \cdot \pa w \, dxdt$$
we use that $\pa h_{\text{B}} \in S^{Z}(\jr\inv)$, note that $\jr \gg1$, and then use Cauchy-Schwarz: for a small $\eps'>0$, 
$$\eps' \int \chi \cdot \f{u}t \cdot |\de w|^{2}dxdt + \f1{\eps'} \int \chi \cdot \f{t}u \cdot \jr^{-2}w^{2} \,dxdt.$$
We can absorb the left hand term to the left-hand side. 

Assuming $\Box\chi\ls \jr^{-2}$, separating
$|\chi wPw| \ls \chi [ (R\inv w)^2 + (R Pw)^2 ]$ in the right-hand side of 
\cref{1g}, and using the triangle inequality to deal with \cref{comeback}, this proves the desired claim \cref{easy}.
	Thus by \cref{1g} we conclude
\begin{equation*}
 \| \pa \p\|_{\lt(\calR)} 
  	\ls  \|\mu\inv \p_{\le1}\|_{\lt(\ti\calR)} +  \|\jr (\Hab \p \paab\p + \p^{2} \paab\p) \|_{\lt(\ti\calR)}
\end{equation*}and the triangle inequality now establishes \cref{easy}.
	The proof of \cref{easy} for $\cut$ is similar except we now note that $\pa \chi \ls \jt\inv\ju\inv$. 
	
	To conclude the proof of \cref{easy} we observe that 
	$$\jr\Hab\p\pa^{2}\p \ls \mu\inv |\p_{\le 1}|$$
by \cref{2ndDeBd,cor:Sob}. Similarly,
	$$\jr O(\p^{2})\pa^{2}\p \ls \mu\inv |\p_{\le 1}|.$$
\end{proof}

\begin{lemma}\label{cor:derbd}
For $\calR\in\{\crt,\cut\}$
\begin{equation} \label{easy}
  \| \pa \phi_{\le m}\|_{\lt(\calR)} 
  	\ls  \|\mu\inv \phi_{\le m+n}\|_{\lt(\ti\calR)}.
\end{equation}
\end{lemma}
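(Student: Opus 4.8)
The plan is to bootstrap \cref{easy} up from the scalar case \cref{lem:derbd} by commuting the equation \cref{transformed} with vector fields $Z^J$ and running an induction on $m$. The base case $m=0$ is exactly \cref{lem:derbd} (after relabelling $\p_{\le 1}$ as $\p_{\le n}$ for suitable $n$). Assume \cref{easy} holds for all orders up to $m-1$; I will establish it for order $m$. Fix a multi-index $J$ with $|J|=m$ and apply \cref{lem:derbd}'s argument to $w = \p_J$: carrying out the same integration-by-parts identity \cref{1g} with $w=\p_J$ gives
\begin{equation*}
\|\pa\p_J\|_{\lt(\calR)} \ls \|\mu\inv \p_{\le m+1}\|_{\lt(\ti\calR)} + \|\jr (P\p)_J\|_{\lt(\ti\calR)} + \|\jr\,[\text{commutator terms from }[\Box,Z^J]\text{ and lower-order parts of }P]\|_{\lt(\ti\calR)}.
\end{equation*}
So the task reduces to controlling $\jr (P\p)_J$ and the commutator terms in $\lt(\ti\calR)$ by $\|\mu\inv \p_{\le m+n}\|_{\lt(\ti\calR')}$ for a slightly larger region.

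For the right-hand side $f = (P\p)_J$: by \cref{transformed}, $f = \Hab \p\paab\p + O(\p^2)\paab\p$, so $(P\p)_J$ is a sum over $J_1+J_2+J_3 = J$ of terms $H_{J_1}\p_{J_2}\pa^2\p_{J_3}$ (and the cubic analogue). For each such term at least one of $|J_1|,|J_2|,|J_3|$ is $\le N/2$; using the pointwise bounds \cref{assu} on the low-order factor together with \cref{met.ass} for the $H$ factor, and — crucially — \cref{2ndDeBd} to convert a top-order $\pa^2\p_{J_3}$ into $\mu\inv|\pa\p_{\le|J_3|+1}| + \mu\inv\jr\inv|\p_{\le|J_3|+2}|$, one checks that $\jr$ times each term is bounded by $\mu\inv|\p_{\le m+n}|$ pointwise (up to $\eps$ factors which only help), exactly as was done at the end of the proof of \cref{lem:derbd} in the scalar case. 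The commutator $[\Box, Z^J]\p$ and the commutators of $Z^J$ with the $g^\xo\Delta_\xo$, $\paa s_2^{\x\xb}\pab$ and $s_3$ pieces of $P$ produce only terms of the schematic form $\jr^{-\eta}\pa\p_{\le m}$ with $\eta \ge 0$ (for the $\Box$ commutator, the standard identity $[\Box,S]=2\Box$ and $[\Box,\xO]=0$ make this term lower order; for the $s_j$ pieces the symbol decay $\jr^{-j}$ does the work), so these are all $\ls \mu\inv|\pa\p_{\le m}|$ pointwise, and $\|\mu\inv\pa\p_{\le m}\|_{\lt(\ti\calR)}$ is absorbed by the inductive hypothesis applied at order $\le m-1$ (shrinking the region slightly via a cutoff, at the cost of enlarging $n$).

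I would organise the write-up as: (i) quote \cref{lem:derbd} as the base case; (ii) state the induction hypothesis; (iii) rerun the identity \cref{1g} for $w=\p_J$, noting that the only new ingredients relative to \cref{lem:derbd} are the commutator terms and the higher-order right-hand side; (iv) dispose of those via \cref{2ndDeBd}, \cref{cor:Sob} (i.e. \cref{pdb1}), \cref{assu} and \cref{met.ass} exactly as in \cref{lem:derbd}; (v) absorb the remaining $\mu\inv\pa\p_{\le m}$ by the inductive hypothesis. The main obstacle — really the only point requiring care — is the bookkeeping in step (iv): making sure that in every product $H_{J_1}\p_{J_2}\pa^2\p_{J_3}$ one can always route the derivative count so that whichever factor carries the top order is controlled by the $\lt$ norm of $\mu\inv\p_{\le m+n}$ rather than by a pointwise bound, since a naive pointwise estimate on $\pa^2\p_{\le m}$ would need $|J_3|$ up to $m$ and then \cref{2ndDeBd} costs two extra derivatives, which is why $n$ (allowed to grow from line to line) must be taken a fixed finite amount larger than in the base case. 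One also has to verify that the weight $\jr$ in front of $f$ combines with the $\jr^{-1}$ from $H^{\uL\uL}$-type decay, or with the $\mu\inv\jr\inv$ from \cref{2ndDeBd}, to leave exactly the $\mu\inv$ weight claimed; this is routine given $\jr \gg 1$ on $\calR$ when $R\gg 1$ or $U \gg 1$, which is precisely the hypothesis under which \cref{cor:derbd} is stated.
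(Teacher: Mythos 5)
Your proposal is correct and follows essentially the same route as the paper: rerun the multiplier/integration-by-parts identity from \cref{lem:derbd} at order $m$, observe that the only genuinely new terms are the commutators $[P,Z^J]\p$, and then dispose of the quasilinear right-hand side pointwise via \cref{2ndDeBd} and \cref{pdb1} so that $\jr(\Hab\p\paab\p)_{\le m}\ls\mu\inv|\pmn|$. The only cosmetic difference is that you organise the argument as an induction on $m$, whereas the paper treats all orders at once and absorbs the lower-order commutator contributions directly by Cauchy--Schwarz; both are fine since the commutator terms carry extra $\jr^{-\eta}$ decay and $\mu^{-1}$ is small on $\calR$ under the standing hypothesis $R\gg1$ or $U\gg1$.
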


\begin{proof}
Compared to the proof of \cref{lem:derbd}, here one only has to bound $$\int \chi w_{\le m} [P,Z^{\le m}] w \,dxdt.$$ 
Similar arguments involving integration by parts and Cauchy-Schwarz as those seen in \cref{lem:derbd} establish that
$$  \| \pa w_{\le m}\|_{\lt(\calR)} 
  	\ls  \|\mu\inv w_{\le m+n}\|_{\lt(\ti\calR)} +  \|\jr  (\Hab \p \paab\p + O(\p^{2}) \paab\p)_{\le m}\|_{L^2(\tilde \calR)}.$$
	Then to conclude the proof, we simply observe that by \cref{2ndDeBd,cor:Sob}, we have
   $$(\jr\Hab\p\paab\p)_{\le m} \ls  \mu\inv|\pmn|.	$$
  Similarly,
  $$(\jr O(\p^{2}) \paab\p)_{\le m} \ls \mu\inv|\pmn|.$$
	The proof is complete.
\end{proof}

The next proposition shows that the first-order derivative (of solutions to \eqref{eq:problem}) decays pointwise faster by a rate of $\min(\jr,\nm)$. It utilises the initial global decay rate \cref{u/v decay} and the $\lt$ bound \cref{easy} for the first order derivative. 
\begin{proposition} \label{derbound}
Let $\p$ solve the equation \cref{eq:problem}, and assume that 
$$\pmn \ls \jr^{-\x}\jt^{-\xb}\ju^{-\eta}$$
for some sufficiently large $n$. We then have
\begin{equation}\label{claim}
\pa\pm \ls \jr^{-\x}\jt^{-\xb}\ju^{-\eta} \mu\inv, \quad \mu := \la \min(r,| t-r |) \ra.
\end{equation}
\end{proposition}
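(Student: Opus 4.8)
The plan is to localise to a dyadic conical region $\calR \in \{\crt, \cut\}$ and combine the $L^2$ bound for $\pa\pm$ furnished by \cref{cor:derbd} with the $L^\infty$-to-$L^2$ Sobolev-type estimate of \cref{DyadLclsd}, feeding in the hypothesised pointwise bound on $\pmn$ to control the right-hand side. Concretely, I would first apply \cref{DyadLclsd} with $w = \pa\pm$: this gives
$$\|\pa\pm\|_{L^\infty(\calR)} \ls \f1{|\calR|^{1/2}} \sum_{i\le1,j\le2} \lr{ \|Z_{ij}\pa\pm\|_{L^2(\calR)} + \|\mu\, \pa Z_{ij}\pa\pm\|_{L^2(\calR)} }.$$
Since $Z_{ij}\pa\pm$ and $\pa Z_{ij}\pa\pm$ are, up to commutators, $\pa\phi_{\le m+n'}$ and $\pa^2\phi_{\le m+n'}$ for a slightly larger number of vector fields $n'$, the first term is handled directly by \cref{cor:derbd} and the second by the second-derivative bound \cref{2ndDeBd} (which converts $\pa^2\phi_J$ into $\mu\inv\pa\phi_{\le|J|+1} + \mu\inv\jr\inv\phi_{\le|J|+2}$), after which another application of \cref{cor:derbd} returns everything to $L^2$ norms of $\mu\inv\phi_{\le m+n}$ on a slightly enlarged region $\ti\calR$.

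Next I would estimate these $L^2$ norms using the hypothesis $\pmn \ls \jr^{-\x}\jt^{-\xb}\ju^{-\eta}$. On a fixed dyadic region $\calR$ the weights $\jr,\jt,\ju,\mu$ are all comparable to constants (namely $R$ or $T$, $T$, $U$, and $\min(R,U)$ respectively), so
$$\|\mu\inv\phi_{\le m+n}\|_{L^2(\ti\calR)} \ls \mu\inv R^{-\x}T^{-\xb}U^{-\eta} |\calR|^{1/2}.$$
Dividing by $|\calR|^{1/2}$ as dictated by \cref{DyadLclsd} exactly cancels the volume factor and yields $\|\pa\pm\|_{L^\infty(\calR)} \ls \mu\inv R^{-\x}T^{-\xb}U^{-\eta}$, which on $\calR$ is the desired bound $\jr^{-\x}\jt^{-\xb}\ju^{-\eta}\mu\inv$. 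Summing (or rather taking the supremum) over the dyadic regions covering $\{r \le t\}$, and invoking finite speed of propagation together with the compact support of the data to dispose of the region $r > t$ (there $\phi$ and its derivatives vanish for $t$ large, and for bounded $t$ the estimate is trivial), gives the claim globally. One should also separately treat the bounded region near the origin $r \lesssim R_1$, where $\mu \sim 1$ and the estimate reduces to $\pa\pm \ls \jt^{-\xb}$, which follows from \cref{DyadLclsd} and \cref{cor:derbd} directly (or from \cref{pdb1} when $\xb \le 0$); and one must check the low-regularity regions $R \sim 1$ or $U \sim 1$ not covered by the hypotheses $R\gg1$, $U\gg1$ of \cref{cor:derbd}, where \cref{lem:derbd}/\cref{lem:Sob}-type arguments or the crude bound \cref{pdb1} suffice since there are no losing weights.

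The main obstacle I anticipate is bookkeeping rather than conceptual: ensuring the number of vector fields stays within the allowed budget (each application of \cref{DyadLclsd} costs up to three extra vector fields $Z_{ij}$, \cref{2ndDeBd} costs two more, and \cref{cor:derbd} costs a further $n$), so one must absorb all of these into a single "sufficiently large $n$" and check $m + n \le N$ at the end — this is why the hypothesis is stated with $\pmn$ for unspecified large $n$. A secondary subtlety is the interface region $r \sim t$ (equivalently $U$ comparable to $T$), where the dyadic decomposition in $U$ and the one in $R$ overlap; here one should use whichever of $\crt$, $\cut$ is adapted to the point in question, noting that $\mu = \la\min(r,|t-r|)\ra$ is comparable in both descriptions, so no genuine difficulty arises. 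Finally, one must make sure that the commutators $[Z_{ij},\pa]$ and $[Z,P]$ generated along the way only produce lower-order terms already controlled by the hypothesis, which follows from the standard algebra of the vector fields $Z$ with $\pa$ and with $\Box$ plus the symbol bounds in \cref{P def}.
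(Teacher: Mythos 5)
Your proposal is correct and matches the paper's proof in its essential structure: apply \cref{DyadLclsd} with $w=\pa\pm$, convert the second derivative via \cref{2ndDeBd}, control the resulting $L^2$ norms of $\pa\pmn$ by $\mu\inv\pmn$ via \cref{cor:derbd}, and feed in the hypothesis to cancel the $|\calR|^{1/2}$ volume factor. The paper omits the explicit treatment of the boundary cases ($R\sim1$, $U\sim1$, $r\ge t$) that you carefully flag, but otherwise the two arguments are the same.
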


\begin{proof} Let $\calR \in \{ \cut, \crt\}$. 
	Recalling \cref{DyadLclsd}, we have 
\begin{align*}
\|\pa\pm\|_{L^\iy(\calR)} 
&\ls |\calR|\invh \sum_Z \| (Z\pa \pm, \mu \pa Z \pa \pm) \|_{\lt(\calR)} \\
&\ls |\calR|\invh \lr{ \|\pa\pmn\|_{\lt(\calR)} + \|\mu \pa^2 \pmn\|_{\lt(\calR)}	} \\
&\ls |\calR|\invh \lr{ \|\mu\inv \pmn\|_{\lt(\ti\calR)} + \|\mu \pa^2 \pmn\|_{\lt(\calR)}	} \\
&\ls |\calR|\invh \lr{ \|\mu\inv \pmn\|_{\lt(\ti\calR)} + \| \mu \left( \f1\mu|\pa\pmn| + ( 1 + \f{t}\ju ) \jr^{-2}|\pmn| \right) \|_{\lt(\calR)}	} \\
&\ls |\calR|\invh \lr{ \|\mu\inv \pmn\|_{\lt(\ti\calR)} + \|\mu(1 + \f{t}\ju)\jr^{-2} \pmn \|_{\lt(\calR)} } \textrm{by \cref{easy}}\\
&\ls |\calR|\invh \|\mu\inv \pmn\|_{\lt(\ti\calR)} \end{align*}The last line follows because $\mu^2(1 + t/\ju) \ls \jr^2$. 
The claim now follows.
\end{proof}

\section{Preliminaries for the pointwise decay iteration}\label{sec:preliminaries}
\begin{remark}[The initial data] \label{rem:id}
Let $w$ denote the solution to the free wave equation with initial data $(\p_{0}(x), \p_{1}(x))$.
	By the assumption that the initial data are compactly supported and smooth, we have
$$ w_J \ls \jv\inv\ju^{-\min(\xd,1)},$$
which is the final decay rate in \cref{thm:main}.
\end{remark}

\subsection{Summary of the iteration} \label{outline of iteration}
By \cref{rem:id}, we may assume zero initial data in the following iteration. Second, note that it suffices to prove bounds in $\{ u>1\}$, because the desired final decay rate in $t-1 < r < t+C$ already holds by \cref{u/v decay}. 
Third, we distinguish the nonlinearity and the coefficients of $P - \Box$, and for both of these, we apply the fundamental solution. We iterate these two components in lockstep with one another.  

In the iteration process, the decay rates obtained from the fundamental solution are insufficient in the region $\{ r < t/2\}$, so we prove \cref{thm:r-t}. With the new decay rates obtained from \cref{thm:r-t}, we are then able to obtain new decay rates for the solution and its vector fields. At every step of the iteration, \cref{conversion} is used to turn the decay gained at previous steps into new decay rates. 

\subsection{Iteration scheme setup}\label{settingup}
We only work with the assumptions from the $g_B$ case from part (1) of \cref{thm:main} rather than the more general perturbations from part (2) because part (2) follows by straightforward modifications of the following; see for instance \cite{Loo22.II} for the iteration scheme with the more general decay rates stated in part (2).  

We now rewrite the equation for $P\p=f$ and note that the decay rates below are only minimally modified for $P'\p = f$. 
We rewrite \eqref{eq:problem} as
$$\Box\p = (\Box - P)\p + f = -\pa_\alpha(h^{\alpha\beta}\pa_\beta\p) - g^\xo \Delta_\xo \p - V\p + f,$$
with
$f := \Hab \p \paa\pab \p + O(\p^{2}) \paa\pab\phi$. 

Using \eqref{P def}, we can write this as
$$\Box\p \in \pa \left(S^Z(\jr^{-2}) \p_{\leq 1}\right) + S^Z(\jr^{-3}) \p_{\leq 2} + f$$
	Pick any multiindex $|J| \ll N$. We commute with the vector field $Z^J$ and obtain
\begin{equation}\label{first write}
\Box\p_J \in \pa \left(S^Z(\jr^{-2}) \p_{\leq m+1}\right) + S^Z(\jr^{-3}) \p_{\leq m+2} + f_{\le m}
\end{equation}

Due to the derivative gaining only $\ju\inv$ in the wave zone (see \cref{derbound}), we shall perform an additional decomposition as follows.
First, we note that, for any function $w$, 
\begin{equation}\label{D decomp}
\pa w \in S^Z(\jr^{-1}) w_{\leq 1} + S^Z(1) \pa_t w, \quad r\geq t/2
\end{equation}
which is clear for $\pa_t$ and $\pa_\xo$, while for $\pa_r$ we write
$\pa_r = r\inv(S- t\pat).$

Let $\chi_\text{cone}$ be a cutoff adapted to the region $t/2 \le r \le 3t/2$. We now rewrite \eqref{first write} as
\begin{equation}\label{final write}
\Box\p_J \in S^Z(\jr^{-3}) \p_{\leq m+2} + (1- \chi_\text{cone}) \left(S^Z(\jr^{-2}) \pa\p_{\leq m+1}\right) + \pa_t \left(\chi_\text{cone} S^Z(\jr^{-2}) \p_{\leq m+1}\right) + f_{\le m}
\end{equation}
	We now write $\p_J = \sum_{j=1}^3\p_j$ where the functions $\p_j$ solve
\begin{equation}\label{decomp}
\begin{split}
\Box \p_1 = G_1, \quad G_1 \in S^Z(\jr^{-3}) \p_{\leq m+2} + (1- \chi_\text{cone}) \left(S^Z(\jr^{-2}) \pa\p_{\leq m+1}\right) \\
\Box \p_2 = \pa_t G_2, \quad G_2\in \chi_\text{cone} S^Z(\jr^{-2}) \p_{\leq m+1} \\
\Box \p_3 = f_{\le m} = G_3
\end{split}
\end{equation}
We define $\p_{2}$ in order to deal with the metric terms $\hab$ near the light cone.

	Henceforth the convention in \cref{subsec:N} will apply to the symbol $n$. 
\subsection{Estimates for the fundamental solution}\label{sec:estsfdmt}

\begin{lemma}\label{conversion}
Let $m\ge0$ be an integer and suppose that $\psi : [0,\iy)\times\R^3\to\R$ solves $$\Box\psi (t,x)= G(t,x), \qquad \psi(0) = 0, \quad \pa_t \psi(0) = 0. $$ 
Define
\begin{equation}\label{def:h}
h(t,r) := \sum_{i=0}^2 \|\Omega^i G (t, r\omega)\|_{L^2(S^2)}
\end{equation}which serves as a radial majorant of $|G|$, thus $|G| \le h$. 
	Assume that %
$$ h(t,r)  \ls \f{1}{ \jr^\x \la v\ra^{\beta} \la u\ra^\eta }, \quad 2 < \x < 3, \ \ 3 < \x < \iy, \quad \beta\geq 0, \quad \eta\geq -1/2.$$
	Define%
\[
\tilde\eta = \left\{ \begin{array}{cc} \eta -2,& \eta<1   \cr -1, & \eta > 1 
  \end{array} \right. .
\]
Then inside the region $\{ u > 1\}$, we have
\begin{equation}\label{Bd1}
\psi(t, x)\ls \frac{1}{\la r\ra\la u\ra^{\alpha+\beta+\tilde\eta-1}}.
\end{equation}
\end{lemma}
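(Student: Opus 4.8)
The plan is to estimate $\psi(t,x)$ using the fundamental solution of the wave equation in $\R^{1+3}$, which for zero data gives
\[
\psi(t,x) = \frac{1}{4\pi}\int_{|x-y| \le t} \frac{G(t-|x-y|,y)}{|x-y|}\,dy.
\]
Since $|G| \le h(t,r)$ with $h$ radial, I would first reduce to a scalar integral over the backward light cone. Passing to the variables adapted to the cone --- writing $(\rho,s)$ for the radial and time variables of the integration point and using the standard identity that converts the solid-ball integral into an integral over the two-dimensional region $D_{tr}$ from \cref{def:Dtr} --- one obtains a bound of the shape
\[
|\psi(t,x)| \ls \frac{1}{r}\int_{D_{tr}} h(s,\rho)\,\frac{\rho}{\cdot}\,dA,
\]
where the weight comes from the spherical averaging; here one uses that $u>1$ so that $r \sim v$ is comparable to the relevant scales and the $1/r$ prefactor is legitimate. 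The first real step, then, is to set up this reduction carefully and record the precise weighted integral to be estimated, using the majorant hypothesis $h(s,\rho) \ls \jrho^{-\x}\js^{-\beta}\langle \s-\rho\rangle^{-\eta}$ (with $\s,\rho$ playing the roles of $v,u$ inside the integral, i.e.\ $\langle s+\rho\rangle$ and $\langle s-\rho\rangle$).

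Next I would perform a dyadic decomposition of $D_{tr}$ in the radial variable $\rho$, splitting into the pieces $\co = D_{tr}\cap\{\rho\sim R\}$. On each such piece the integrand is essentially constant in the "distance from the cone" direction up to the weight $\langle s-\rho\rangle^{-\eta}$, so the $\rho$-integral factors into (i) an integral in the null direction $s-\rho$, which produces the gain encoded in $\tilde\eta$ --- namely $\int \langle u'\rangle^{-\eta}\,du' \sim \langle u\rangle^{\,\tilde\eta+1}$ when $\eta<1$ (the integral grows, or is logarithmic at the endpoint $\eta=1$ which is why the definition switches to $\tilde\eta=-1$ for $\eta>1$), and is bounded when $\eta>1$; and (ii) a sum over the dyadic scales $R$ of $R^{-\x}$ times the appropriate power of the transverse length. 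Because $\x>2$ (in fact the hypothesis lists $2<\x<3$ and $3<\x<\infty$, so $\x\neq 3$ and the sum $\sum_R R^{2-\x}$ or $\sum_R R^{1-\x}$ converges at one end), the dyadic sum over $R$ converges and is dominated by its largest or smallest term, which is comparable to a power of $\langle u\rangle$ and $\langle v\rangle$. Collecting the $\langle v\rangle^{-\beta}$ factor (essentially constant $\sim \langle u\rangle^{-\beta}$ after using $v\sim r$ on the relevant part, or kept as is) and the $1/r$ prefactor yields exactly $\langle r\rangle^{-1}\langle u\rangle^{-(\x+\beta+\tilde\eta-1)}$.

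I expect the main obstacle to be the bookkeeping in the dyadic sum over $R$: one must track which endpoint of the sum $\sum_R R^{\,\text{(power)}}$ dominates depending on the sign of the exponent, handle the borderline logarithmic cases (this is precisely the reason the statement separates $2<\x<3$ from $3<\x<\infty$ and why $\tilde\eta$ is defined with a jump at $\eta=1$), and verify that the geometry of $D_{tr}$ in the region $\{u>1\}$ really does give transverse length $\sim \langle u\rangle$ paired with small $\rho$ while small $t$ is excluded by the compact-support/finite-speed reduction already made in \cref{outline of iteration}. A secondary point requiring care is justifying the passage from the spherical average of $G$ to the majorant $h$ with only $\Omega^i$, $i\le 2$, derivatives --- this is where Sobolev embedding on $S^2$ enters --- but this is routine given \cref{def:h}. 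Once the weighted integral is set up, the rest is a finite case analysis on the exponents.
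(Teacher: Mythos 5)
Your plan is correct and follows the same route as the paper's own (sketch) proof: reduce, via positivity of the fundamental solution and Sobolev embedding on $S^2$, to $r\psi\ls\int_{D_{tr}}\rho\,h(s,\rho)\,ds\,d\rho$, then integrate the assumed decay of $h$ over $D_{tr}$ by dyadic decomposition in $\rho$ using the null direction to produce the $\tilde\eta$ gain --- which is exactly the machinery the paper sets up in \cref{def:Dtr} with $D_{tr}^{R}$ and the partition $\calR_1,\calR_2$. The displayed hypothesis on $\alpha$ in the statement is internally inconsistent as written (evidently a typo), and your reading of it as ruling out the borderline exponent is a reasonable one.
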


\begin{proof}
The idea is to use Sobolev embedding and the positivity of the fundamental solution of $\Box$ to show that
$$r\psi \lesssim \int_{D_{tr}} \rho h(s,\rho) ds d\rho,$$
where $D_{tr}$ is the backwards light cone with vertex $(r, t)$, and use \cref{def:h}. After plugging in the decay rates for $h(t,r)$ and integrating, \cref{Bd1} is the result.
\end{proof}

For $\p_{2}$ we will use the following result for an inhomogeneity of the form $\pat G$ supported near the light cone. The result is similar to \cref{conversion}, aside from a gain of $\ju$ in the estimate in the conclusion: see \cref{Bd1der} compared to \cref{Bd1}.

For both of the problems $P\p = f$ and $P'\phi = f$ with zero initial data (recall \cref{rem:id}), we shall use \cref{lem:cone} only once, namely when we transition the bound $\p \ls \jv\inv$ into the final bound \cref{eq:main bound}. 

\begin{lemma}\label{lem:cone}
Let $\psi$ solve 
\begin{equation}\label{Mink2}
\Box \psi = \pa_t G, \qquad \psi(0) = 0, \quad \pa_t \psi(0) = 0,
\end{equation}
where $g$ is supported in $\{\f12 \leq \f{|x|}t \leq \f32 \}$ (that is, near the light cone). Let $h$ be as in \cref{def:h}, and assume that 
$$|h| + |Sh| + |\Omega h| + \la t-r\ra |\pa h| \ls \frac{1}{\la r\ra^{\alpha}\la u\ra^{\eta}},  \quad 2 < \alpha <3,  \quad \eta\geq -1/2.$$
	Then in the region $\{ u > 1\}$, when $\alpha+\eta > 3$ we have
\begin{equation}\label{Bd1der}
\psi(t, x)\lesssim \frac{1}{\la r\ra\la u\ra^{\alpha+\tilde\eta}}.
\end{equation}

\end{lemma}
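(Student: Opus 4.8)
The plan is to adapt the proof of \cref{conversion} to the case where the inhomogeneity is a pure time derivative $\pa_t G$ with $G$ localised near the light cone, and to extract the extra power of $\ju$ from an integration by parts in $s$ against the fundamental solution. First I would recall the Kirchhoff-type representation for $\Box\psi = F$ with zero data: $r\psi(t,x) \ls \int_{D_{tr}} \rho\, \|F(s,\rho\,\cdot)\|_{L^\infty(S^2)}\, ds\, d\rho$ after the usual Sobolev embedding in the angular variables (so that we may replace the $L^\infty(S^2)$ norm by $h$), exactly as in \cref{conversion}. The novelty is $F = \pa_t G = \pa_s G$, so I would like to move the $\pa_s$ onto the kernel. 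Parametrising $D_{tr}$ by null-type coordinates adapted to the cone with vertex $(t,x)$, the kernel $\rho/|(t-s,x-y)|$ has a $\pa_s$ derivative that is comparable in size to $\rho \la t-s-(r-\rho)\ra\inv$ times the original kernel only on the part of the cone mantle, so integrating by parts in $s$ trades one factor of $h$ for $|\pa_s h|$ plus the kernel derivative acting on $h$; the hypothesis $\la t-r\ra|\pa h| \ls \jr^{-\x}\ju^{-\eta}$ is tailored precisely so that $\pa_s h$ is no worse than $\ju\inv$ times the majorant, and the boundary terms from the integration by parts live on $\{s = 0\}$ (killed by zero data) and on the two null boundaries of $D_{tr}$, where either $\rho$ or $t-r-(s-\rho)$ vanishes and so the contributions are controlled. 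The support condition $\tfrac12 \le |x|/t \le \tfrac32$ on $G$ restricts the $s,\rho$ integration to a neighbourhood of the cone, which is what makes the $\ju$-gain integrable rather than merely producing a logarithm.

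Concretely, the key steps in order: (1) write the representation formula and reduce $|\pa_t G|$ on the cone mantle to $h, |Sh|, |\Omega h|, \la t-r\ra|\pa h|$ via the given pointwise hypothesis — this is where one uses that $S$ and $\Omega$ span the tangential directions and $\pa$ the transversal one, so that $\pa_s G$ restricted to the integration region is bounded by $\ju\inv$ times $\jr^{-\x}\ju^{-\eta}$ after absorbing the weight; (2) substitute this into the $D_{tr}$ integral and localise via $\ccone$ to $\{u \lesssim \rho \lesssim v\}$ near the mantle; (3) perform the $(s,\rho)$ integration just as in \cref{conversion} but with the integrand carrying one extra power of $\ju\inv$ relative to that lemma, which under the condition $\x + \eta > 3$ converges and produces $\la r\ra\inv \la u\ra^{-\x-\ti\eta}$ after comparing the exponents (the shift $\eta \mapsto \ti\eta$ is the same case split $\eta \lessgtr 1$ as before, coming from whether the $\ju$-integral is dominated near $u$ or near $v$); (4) finally note that on $\{u>1\}$ and on the support near the cone, $\la r\ra \sim \la v\ra$, so the $\jv$-weight that appeared in \cref{conversion} is harmlessly converted. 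I would organise (3) by splitting $D_{tr}$ into the dyadic pieces $\co$ over $R \in \calR_1 \cup \calR_2$ exactly as set up in \cref{def:Dtr}, bounding each piece and summing the geometric series.

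The main obstacle I expect is step (1)–(2): making rigorous the claim that integrating by parts in $s$ against the fundamental solution of $\Box$ genuinely converts $\pa_t G$ into ``$h$ with an extra $\ju\inv$'' without losing on the boundary of $D_{tr}$. The clean way to see it is to avoid differentiating the (only Lipschitz) kernel altogether: instead use that on the support $\tfrac12 \le |x|/t \le \tfrac32$ one has $\pa_t \sim \pa_t + \pa_r$ up to $S/t$ and $\Omega/r$ corrections — i.e. $\pa_t G$ is, modulo lower-order tangential terms, a \emph{good} derivative $\tpa G$ of the type appearing in \cref{thm:iteration} — and a good derivative hitting $G$ is what gains $\ju\inv$ when one integrates the fundamental solution, because $\tpa$ is tangent to the outgoing cones along which the kernel is constant in the relevant variable. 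So the real content is the identity $\pa_t = \tfrac12(\pat+\pa_r) + \tfrac12(\pat-\pa_r)$ combined with $\pat - \pa_r = r\inv(t\inv(\cdots) )$-type rewriting valid on $r \sim t$, reducing everything to either a tangential-derivative gain or an $S,\Omega$ commutator already covered by the hypotheses on $Sh,\Omega h$. Once that reduction is in hand the remaining integrations are routine and parallel to \cref{conversion}.
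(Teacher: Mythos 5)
Your plan misses the paper's key move: introducing an auxiliary $\tpsi$ solving $\Box\tpsi = G$ with zero data (so that $\psi = \pa_t\tpsi$), and applying the vector-field identity to the \emph{solution} $\tpsi$ rather than to the source. Concretely, the paper first bounds the boosts of $h$ by $|Sh| + |\Omega h| + \la t-r\ra|\pa_r h|$ using the hypotheses, then applies \cref{conversion} (with $\beta=0$) to each of $\pa\tpsi$, $S\tpsi$, $\Omega\tpsi$ and $(t\pa_i + x_i\pa_t)\tpsi$ --- each of whose commuted sources is $\ls \jr^{-\alpha}\ju^{-\eta}$ --- obtaining the bound $\jr^{-1}\ju^{-(\alpha+\tilde\eta-1)}$ for each, and only \emph{then} invokes
\begin{equation*}
\ju\,|\pa_t\tpsi| \ls |\pa\tpsi| + |S\tpsi| + |\Omega\tpsi| + \sum_i |(t\pa_i + x_i\pa_t)\tpsi|,
\end{equation*}
which follows from $(t^2-r^2)\pa_t = tS - r\,\omega_i(t\pa_i + x_i\pa_t)$. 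Multiplying the output of \cref{conversion} by $\ju^{-1}$ at this final stage is lossless and produces exactly \cref{Bd1der}.

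Your alternative --- writing $\pa_t - \pa_r = (t-r)^{-1}\bigl(S - \omega_i(t\pa_i + x_i\pa_t)\bigr)$ to gain $\ju^{-1}$ on the \emph{source} and then feeding $\jr^{-\alpha}\ju^{-\eta-1}$ into \cref{conversion} --- does not close the argument, because the exponent map $\eta\mapsto\tilde\eta$ saturates at $-1$ once $\eta>1$. Since $2<\alpha<3$ and $\alpha+\eta>3$ force $\eta>0$, the shifted exponent $\eta+1$ is always past the threshold, so \cref{conversion} returns only $\jr^{-1}\ju^{-\alpha+2}$; this misses the target $\jr^{-1}\ju^{-\alpha-\tilde\eta}$ by $\ju^{-\eta}$ (if $\eta<1$) or $\ju^{-1}$ (if $\eta>1$). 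In short, pushing the $\ju^{-1}$ through $\Box^{-1}$ as part of the source is strictly weaker than applying it after $\Box^{-1}$. Your first route (integration by parts in $s$ against the fundamental solution) also leaves the mantle boundary term unestimated, and this term is not negligible: it is a one-dimensional integral along $\{s+\rho = t+r\}$ on which $G$ is genuinely nonzero under the stated support assumption, and your assertion that "the contributions are controlled" is not substantiated. Both of your routes therefore fall short of \cref{Bd1der}; the missing idea is the auxiliary $\tpsi$.
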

\begin{proof}
Let $\tpsi$ solve $$\Box \tpsi = G, \quad \tpsi(0)= 0, \pat \tpsi(0)=0.$$
In the support of $g$, we have that the boosts of $h$ are bounded by
\[
(t \pai + x_i \pat) h \ls |Sh| + |\xO h| + \la t-r\ra |\pa_r h|
\]	where $h$ was defined in \cref{def:h}. 
	By \cref{conversion} with $\xb=0$ applied to the functions $\pa \tpsi$,  
$\xO \tpsi$, $S \tpsi$, and noting that the bound
$$\ju \partial_t \tpsi \ls |\pa \tpsi| +|S\tpsi|+|\xO \tpsi|  + \sum_i  | (t \partial_i + x_i \partial_t) \tpsi|$$ holds,  we now see that \cref{Bd1der} holds.
\end{proof}

\section{Propagating pointwise decay from $\{ r \ge t/2\}$ into $\{r\le t/2\}$}\label{sec:propag}
In this section we show how to convert the $\jr\inv$ decay for vector fields of the solution (which arises from the fundamental solution) into a factor of $\jt\inv$. Thus pointwise decay in $\{r\ge t/2\}$ is propagated into $\{ r\le t/2\}$.  
	While we intend to provide sufficient detail for a self-contained proof here, the reader looking for an alternate exposition can find it in \cite{L} or \cite{MTT}.

\begin{lemma}\label{lem:Ltwo} %
Suppose that the operator $P$ from \cref{P def} satisfies the stationary ILED \cref{def:SILED}. 
For all $0 \leq  T_1 \leq T_2$, we have
\begin{align}\label{Ltwo.est}
\begin{split}
\|\pa\pm&\|_{\lt( [T_1,T_2] \times \{ r \le t \} )}
  \ls \sum_{j=1}^2 \| \jr^{1/2} \pa \pm(T_j)\|_{\lt}  + \| \jr f_{\le m}\|_{\ltlt} + \|\pat \pm \|_{L^2L^2}. 
\end{split}
\end{align}
\end{lemma}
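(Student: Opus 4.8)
The plan is to deduce \cref{Ltwo.est} from the stationary ILED estimate \cref{eq:SILED} applied to $\pm$ on the time slab $[T_1,T_2]$. The only work is to bound the $LE^*$ norm of $\pa^{\le m}(P\pm)$ by the right-hand side of \cref{Ltwo.est}; everything else in \cref{eq:SILED} already appears on the right-hand side of \cref{Ltwo.est} (the energy at the two endpoints, and the $\pat$ term going into the $LE$ norm which we simply dominate by the $L^2L^2$ norm on $\{r\le t\}$), and the left-hand side $\|\pa^{\le m}\pm\|_{LE^1}$ controls $\|\pa\pm\|_{\lt([T_1,T_2]\times\{r\le t\})}$ since on $\{r\le t\}$ the weight $\jr^{-1/2}$ in the $LE$ norm is harmless (we are in a fixed bounded-in-$r/t$-ratio region and can further localise dyadically in $r$).

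First I would write $P\pm = (P-\Box)\pm + \Box\pm$ and recall from \cref{first write} that, after commuting with $Z^J$ for $|J|\le m$, we have $P\pm \in \pa(S^Z(\jr^{-2})\p_{\le m+1}) + S^Z(\jr^{-3})\p_{\le m+2} + f_{\le m}$. So I need to estimate each of these three pieces in $LE^*$. For the genuinely nonlinear piece $f_{\le m}$, I would put $\jr^{1/2} f_{\le m}$ into $L^2L^2$ by summing over dyadic $R$: since $\sum_R \|\jr^{1/2} f_{\le m}\|_{\lt(A_R)} \ls \|\jr f_{\le m}\|_{\ltlt}$ on the relevant region (the extra $\jr^{1/2}$ power absorbs the dyadic summation, as $\jr \gs 1$), this yields exactly the $\|\jr f_{\le m}\|_{\ltlt}$ term. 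For the linear perturbative terms, the point is that their coefficients decay: $S^Z(\jr^{-3})\p_{\le m+2}$ and $\pa(S^Z(\jr^{-2})\p_{\le m+1})$ both have enough $\jr$ decay that, after applying \cref{2ndDeBd} and \cref{cor:derbd} to trade the extra derivative for $\mu\inv\p_{\le m+n}$ and then invoking \cref{cor:Sob}/\cref{pdb1} or absorbing into the left-hand side, they are controlled by $\|\pa\pm\|_{\lt(\{r\le t\})}$ with a small constant (or by the energy at $T_1$); the gain $\jr^{-2+}$ beats the $\jr^{1/2}$ weight in $LE^*$ and the dyadic sum, so these terms can be absorbed into the left-hand side of the resulting inequality.

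The main obstacle is the absorption of the linear perturbative terms: after placing $\jr^{1/2}\cdot S^Z(\jr^{-2})\pa\pm$ into $\sum_R L^2(A_R)$ one gets $\sum_R R^{-3/2}\|\pa\pm\|_{\lt(A_R\cap\{r\le t\})}$, which is summable in $R$ and hence bounded by $C\|\pa\pm\|_{\lt(\{r\le t\})}$ — but with a constant that is \emph{not} small, so a naive absorption fails. The fix is the standard one: either restrict to $r\ge R_1$ with $R_1$ large so that the $\jr^{-2}$ coefficient carries a small constant in $r>R_1$ (as in \cref{lem:derbd}, using $R\gg1$), handling $r\le R_1$ separately via the energy bound \cref{EB} and \cref{pdb1} since there the region is compact; or use \cref{eq:SILED} with the $\pat$-term already peeled off and a Gronwall/bootstrap in $T_2$. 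I would take the former route, splitting the spatial integral at $r=R_1$ and noting that the inner region contributes only lower-order terms already accounted for by $\|\jr^{1/2}\pa\pm(T_1)\|_{\lt}$ and $\|\jr f_{\le m}\|_{\ltlt}$, while the outer region is absorbed. Finally I would remark that the same argument applies verbatim with $P$ replaced by $P'$, the coefficients in \cref{P' def} decaying even one power faster being only more favourable.
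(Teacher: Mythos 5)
Your plan of directly applying the stationary ILED estimate \cref{eq:SILED} cannot work, because the step that converts the $LE^1$ norm into the unweighted $L^2$ norm on $\{r\le t\}$ loses an unaffordable power of $T$. Recall $\|\phi\|_{LE^1([T_1,T_2])}=\sup_R\|\jr^{-1/2}\de\phi\|_{L^2([T_1,T_2]\times A_R)}+\ldots$ is a \emph{supremum} over dyadic shells, while the left-hand side of \cref{Ltwo.est} is the unweighted $L^2$ over the entire region $\{r\le t\}$. Summing over $R$ gives
\[
\|\pa\pm\|_{L^2([T_1,T_2]\times\{r\le t\})}^2\sim\sum_{R\ls T_2}\|\pa\pm\|_{L^2(A_R)}^2\sim\sum_{R\ls T_2}R\,\|\jr^{-1/2}\pa\pm\|_{L^2(A_R)}^2\ls T_2\,\|\pm\|_{LE^1}^2,
\]
i.e.\ you pick up a factor $T_2^{1/2}$, and there is no way to recover it. The assertion that the weight $\jr^{-1/2}$ is ``harmless on $\{r\le t\}$'' is the gap: the region $\{r\le t\}$ with $t\in[T_1,T_2]$ extends out to $r\sim T_2$, and the $\sup_R$ structure of $LE$ precisely drops the summation in $R$ that you would need. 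Everything downstream in your proposal (absorbing the $S^Z(\jr^{-2})$ and $S^Z(\jr^{-3})$ pieces, embedding $\|\jr f\|_{\ltlt}$ into $LE^*$) is fine, but it is built on a false first step.

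The paper does not deduce \cref{Ltwo.est} from \cref{eq:SILED} at all; it proves it directly by a KSS-type multiplier identity. One multiplies $P\phi$ by $r\pa_r\phi+\phi$ and integrates by parts over $[T_1,T_2]\times\R^3$. This particular multiplier produces, after integration by parts, the \emph{unweighted} bulk term $\int|\de\phi|^2\,dx\,dt$ on the left (together with lower-order error terms $O(\jr^{-q'})|\de\phi|^2+O(\jr^{-1-q'})|\pao\phi|^2+O(\jr^{-2-q'})|\phi|^2$ coming from the coefficients of $P-\Box$, absorbable for $r$ large by the positivity $q'>0$, Hardy, and Cauchy--Schwarz), while the boundary terms at $t=T_j$ come out as $\int\jr|\de\phi(T_j)|^2+\jr^{-1}|\phi(T_j)|^2\,dx\sim\|\jr^{1/2}\de\phi(T_j)\|_{L^2}^2$, matching exactly the right-hand side of \cref{Ltwo.est}. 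The source terms are handled by Cauchy--Schwarz as $\eps^{-1}\|\jr P\phi\|_{\ltlt}^2+\eps\|\pa_r\phi\|_{\ltlt}^2$ with the second piece absorbed. The higher-multiindex case repeats this with $\phi_J$ after commuting, the commutator error being lower order in $\jr$. In short: \cref{Ltwo.est} is a separate $\jr^{1/2}$-weighted-energy estimate that is \emph{not} a consequence of $LE^1$ control, and the missing idea in your proposal is the multiplier $r\pa_r\phi+\phi$.
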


\begin{proof}
We demonstrate the case $m=0$ first for simplicity. We multiply the equation by $r\pa_r\phi + \phi$ and integrate by parts in $[T_1,T_2]\times\R^3$. By the assumptions on either the operator $P$ or $P'$, there exists a number $q'>0$ dependent on the decay rates of the coefficients of the operator such that
\begin{align}\label{computation}
\begin{split}
 \int |\de &\phi|^2 + O(\jr^{-q'})  |\de \phi|^2 + O(\jr^{-1-q'}) |\pao\p|^2 + O( \jr^{-2-q'}) |\phi|^2  \,dxdt \\
  &\ls \sum_{j=1}^2 \int_{\R^3} O(\la r \ra) |\de \phi(T_j,x)|^2 +  O(\la r \ra^{-1}) |\phi(T_j,x)|^2 \, dx + \int |r(P\p)\pa_r\phi| + |(P\p)\p| \,dxdt \\
  &\ls \sum_{j=1}^2 \int_{\R^3} O(\la r \ra) |\de \phi(T_j,x)|^2 \, dx + \int |r(P\p)\pa_r\phi| + |(P\p)\p| \,dxdt \\
\end{split}
\end{align}with the last statement following by a version of Hardy's inequality. 
	
	By the Cauchy-Schwarz inequality and Hardy's inequality we can bound all the terms involving $P\p$ by 
	$$\f1\eps \|rP\p\|_\ltlt^2 + \eps\|\pa_r \phi\|_\ltlt^2.$$ 	
	By using the positivity of $q'$ on the left-hand side of \cref{computation} for large $|x|$ values, we can then obtain
\begin{equation}\label{sled m=0}
\begin{split}
\| \de \phi&\|_{\lt[T_1,T_2]\lt} \ls \sum_{j=1}^2 \| \jr^{1/2} \de \phi(T_j)\|_{L^2} +  \|\jr P\p\|_{L^2[T_1,T_2]L^2} .
\end{split}
\end{equation}
This concludes the proof of the $m=0$ case. 

\item (The higher multiindex case)
We now prove \eqref{sled m=0} but for $\phi_J, J\ne \vec 0$. 
	We have
\begin{align*}
P \phi_J 	&= (P\p)_{J} + O(\jr^{-1-q'}) \de \phi_{\leq |J|} + O(\jr^{-2-q'}) \phi_{\leq |J|-1}.
\end{align*}
We multiply this by $r\pa_r \phi_J + \phi_J$. Then we integrate in $[T_1,T_2] \times \R^3$. The rest of the proof is then similar.
\end{proof}

\begin{proposition}\label{lem:aux}Assume that \cref{def:SILED} holds for $P$ \textit{and} that $\p$ solves \cref{eq:problem}. Then we have
$$\|\pm\|_{LE^1(\inte)} 
\ls T\inv \|\jr \pmn\|_{LE^1(\inte)}.$$
\end{proposition}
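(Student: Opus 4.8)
The plan is to view this as the $L^2$ incarnation of the slogan ``$\jr\inv$ decay in $\{r\ge t/2\}$ upgrades to $\jt\inv$ decay in $\{r\le t/2\}$'': I would invoke the stationary local energy decay assumption \cref{eq:SILED} (equivalently, rerun the Morawetz multiplier argument already used for \cref{lem:Ltwo}) on the time slab carrying $\inte$, localized in space to $\inte$, and harvest the factor $T\inv$ from the fact that on $\inte=C_T^{<3T/4}$ one has $t\sim T$, combined with the identity $\pat=t\inv(S-r\pa_r)$. Throughout I would use that on $\inte$ one has $r\ls T\ls t$, so that the weight $\mu=\la\min(r,|t-r|)\ra\sim\jr$ there.

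First I would commute: for each $|J|\le m$ the function $\p_J=Z^J\p$ solves $P\p_J=f_J+E_J$, where $E_J=[P,Z^J]\p$ is, exactly as in the higher-multiindex step of \cref{lem:Ltwo} (see also Lemma 5.2 of \cite{LinToh1}), controlled by the faster-decaying quantities $\jr^{-1-q'}\de\p_{\le m}+\jr^{-2-q'}\p_{\le m}$ plus lower-differentiated copies of $\Box\p$. Next I would fix a smooth cutoff $\chi(r)\psi(t)$ equal to $1$ on $\inte$, supported in a fixed enlargement, with every derivative $O(T\inv)$ and supported respectively in the transition regions $\{r\sim T\}$ and $\{t\sim T\}$. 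Applying \cref{eq:SILED} to $\chi\psi\p_J$ on $[0,\infty)\times\rt$ — the initial- and final-time data terms vanish by compact support in $t$ — then restricting the left side to $\inte$ (where $\chi\psi=1$) and summing over $|J|\le m$ produces, up to a fixed number of additional coordinate derivatives absorbed into the $n$ of $\pmn$,
$$\|\pm\|_{LE^1(\inte)}\ls\sum_{|J|\le m}\Big(\|P(\chi\psi\p_J)\|_{LE^*}+\|\pat(\chi\psi\p_J)\|_{LE}\Big).$$

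The term that creates the gain is $\|\pat(\chi\psi\p_J)\|_{LE}$. Splitting $\pat(\chi\psi\p_J)=\chi\psi\,\pat\p_J+(\pat\psi)\chi\p_J$ and using $\pat=t\inv(S-r\pa_r)$ with $t\sim T$ on the support, it is $\ls T\inv\big(\|\p_{\le m+1}\|_{LE(\tinte)}+\|\jr\de\p_{\le m}\|_{LE(\tinte)}+\|\p_{\le m}\|_{LE(\tinte)}\big)$, since $|S\p_{\le m}|\le|\p_{\le m+1}|$, $|r\pa_r\p_{\le m}|\le\jr|\de\p_{\le m}|$, and $|\pat\psi|\ls T\inv$; and each of these is $\ls T\inv\|\jr\pmn\|_{LE^1(\inte)}$ because $\|\jr\pmn\|_{LE^1}\gs\|\pmn\|_{LE}$ and, using $\de(\jr\pmn)=\jr\de\pmn+O(1)\pmn$, also $\|\jr\de\pmn\|_{LE}\ls\|\jr\pmn\|_{LE^1}$. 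For $\|P(\chi\psi\p_J)\|_{LE^*}$ I would split $\chi\psi\,P\p_J=\chi\psi(f_J+E_J)$ off from the cutoff commutator $[P,\chi\psi]\p_J$: the commutator is supported where $\jr\sim T$ and pairs powers of $T\inv$ (from derivatives of the cutoff) with $\de\p_{\le m}$ and $\p_{\le m}$, hence is $\ls T\inv\|\jr\pmn\|_{LE^1}$, while $E_J$ carries strictly faster spatial decay and is better still. The last and most delicate piece is $\chi\psi f_J$: here I would feed in $f=\Hab\p\paab\p+O(\p^2)\paab\p$, the bound \cref{2ndDeBd} in the form $\paab\p_{\le m}\ls\jr\inv|\pa\p_{\le m+1}|+\jr^{-2}|\p_{\le m+2}|$ (valid on $\inte$, where $\mu\sim\jr$), and the a priori pointwise bounds \cref{assu} for the low-order factor (which on $\inte$ is $\ls\jt\inv\la t-r^*\ra^{1/2}\sim T^{-1/2}$), to check that $\|\jr^{1/2}\chi\psi f_{\le m}\|_{\lt}$ is of lower order relative to $T\inv\|\jr\pmn\|_{LE^1(\inte)}$ and hence absorbed. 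I expect the main obstacle to be exactly this last verification — ensuring the quasilinearity does not spoil the $T\inv$ gain, which is precisely why \cref{2ndDeBd} and \cref{assu} were established first — together with keeping honest the mismatch between $\inte$ and its slight enlargement $\tinte$ (handled, as usual, by a finite iteration over nested cutoffs). The underlying Morawetz integration by parts is itself already contained in \cref{lem:Ltwo}.
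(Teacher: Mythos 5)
Your overall framing — invoke \cref{eq:SILED}, use the identity $\pat = t^{-1}(S - r\pa_r)$ with $t\sim T$ on $\inte$ to harvest the $T^{-1}$ gain, and use \cref{2ndDeBd} together with Hardy to absorb the quasilinear term — is the same as the paper's. The substantive divergence is your use of a \emph{temporal} cutoff $\psi(t)$ to make the boundary-in-time terms in \cref{eq:SILED} vanish. The paper deliberately localises in space only: it keeps the terms $\|\pa\pm(T)\|_{L^2_x}$ and $\|\pa\pm(2T)\|_{L^2_x}$, bounds them by the fundamental theorem of calculus plus Cauchy--Schwarz (which produces spacetime $L^2L^2$ quantities), and then feeds those into \cref{lem:Ltwo} to close the cascade. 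Your parenthetical dismissing \cref{lem:Ltwo} as ``equivalent'' to rerunning SILED misreads its role: it is a separately proved $L^2$-weighted Morawetz estimate whose right-hand side lives in $\jr$-weighted $L^2L^2$ spaces, not $LE^*$, and it is invoked precisely because FTC produces $L^2L^2$ norms that $LE^*$ does not control.

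This choice opens a genuine gap in your handling of the cutoff commutator. You write ``the commutator is supported where $\jr\sim T$,'' which holds for the spatial piece $\psi\,[P,\chi]\p_J$ but \emph{fails} for the temporal piece $\chi\,[P,\psi]\p_J$: the latter — containing $\pat\psi\cdot\pat\p_J$ and $\pat^2\psi\cdot\p_J$ — is supported where $t$ lies in the time-transition region, across the entire radial range $r<3T/4$. To absorb it you would need, after factoring out $T^{-1}$, something of the form $\|\pat\p_J\|_{\les(\tinte)}\ls\|\jr\pmn\|_{\leo(\tinte)}$. But $\les$ is a \emph{sum} over dyadic annuli with weight $\jr^{1/2}$, whereas $\leo$ is a \emph{sup}; bounding a sum over $\sim\log T$ annuli by a sup is not automatic and requires showing the summands are geometrically dominated at $R\sim T$, which your proposal neither states nor checks. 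The paper's spatial-cutoff-plus-FTC route bypasses this mismatch, because \cref{lem:Ltwo}'s right-hand side is $L^2L^2$ with an explicit $\jr$ weight, and the subsequent dyadic decomposition $\|\jr^{1/4}\pa\pm\|_{L^2}\sim\sum_R\|R^{1/4}\pa\pm\|_{\lt(A_R)}$ is then absorbed or dominated by the paper's explicit splitting into $R$ close to $T$ versus $R$ small. If you retain the temporal cutoff you must supply that dyadic-summability argument yourself; at that point the bookkeeping is at least as involved as the paper's FTC--\cref{lem:Ltwo} loop, so the temporal cutoff is not actually the simplification it appears to be.
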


\begin{proof}
We may assume that $\phi$ is supported in $\inte$ because $[P,\chi_\inte]$ can be controlled where $\chi_\inte$ is a smooth cutoff that localises in space only (leaving the time variable alone).  
Thus 
$$\|\pm\|_{LE^1(C)} \ls \|\pa\pm(T)\|_{L^2_x} + \|\pa\pm(2T)\|_{L^{2}_{x}} + \|(P\phi)_{\le m}\|_{LE^*(C)} + \|\pat\pm\|_{LE(C)}, \  C:=\inte.$$
By the fundamental theorem of calculus and the Cauchy-Schwarz inequality, we have
$$\|\pa\pm(T')\|_{L^{2}_{x}} \ls T^{-1/2}\|(\pa\pm,S\pa\pm)\|_{\lt\lt}, \quad T'\in \{T,2T\}. $$
	By \cref{lem:Ltwo}, we have
\begin{equation}\label{thing}
\|\pa \pm\|_{\lt(C)} \ls \sum_{i =1}^2 \|\jr^{1/2} \pa\pm(i T)\|_{\lt} + \|\jr f_{\le m} \|_{L^2(C)} + \|\pat\pm\|_{\lt(C)}
\end{equation}
Notice 
\begin{itemize}
\item
Since $\pat = t\inv ( S - r \pa_{r} )$,
$$\|\pat\pm\|_{\lt}\ls T\inv \|(S\pm, r\pa_{r}\pm)\|_{\lt} \ls T^{-1/2} \|\jr(S\pm,\pm)\|_{\leo}.$$

\item
By the fundamental theorem of calculus and Cauchy-Schwarz,
\begin{align}\label{first1}
\begin{split}
\|\jr^{1/2} \pa\pm(i T)\|_{\lt}
    &\ls T^{-1/4} \| \jr^{1/4} \pa \pm \|_{\lt(C)} + T^{-3/4} \| \jr^{3/4} S \pa \pm\|_{\lt(C)}\\
    &\ls T^{-1/4} \| \jr^{1/4} \pa \pm \|_{\lt(C)} + T^{-1/2} \|\jr^{1/2} S\pa\pm\|_{\lt} \quad \text{since }\jr \ls T \\
    &\ls T^{-1/4} \| \jr^{1/4} \pa \pm \|_{\lt(C)} + T^{-1/2} \|\jr\pmn\|_{LE^1}
\end{split}
\end{align}

\item
we have
\begin{align}\label{first2}
\begin{split}
\|\jr (H^{\x\xb}\p \pa_{\x\xb}\p)_{\le m} \|_{\lt} 
&\ls \| \jr \pm \pa_{\x\xb}\pm \|_{\lt} \\
&\ls \| \jr\inv \pm(\jr\inv\pmn,\pa\pmn)\|_{\lt}  \quad\textrm{by \cref{2ndDeBd}} \\
&\ls \|\eps\jr^{-5/2}\pm\|_{\lt} + \|\jr\inv\pm\pa\pmn\|_{\lt} \quad \textrm{by \cref{u/v decay}}\\
&\ls \|\eps\jr^{-5/2}\pm\|_{\lt} + \|\eps \jr^{-2} \pm\|_{\lt}\\
&\ls \eps \|\pa_{r}\pm\|_{\lt} \quad \textrm{by Hardy's inequality}
\end{split}
\end{align}
We absorb this into the left-hand side of \cref{thing}. 
\end{itemize}

Thus from the estimate \cref{thing} we obtain the estimate
$$\|\pa\pm\|_{\lt}
\ls {T}^{-1/4}\|\jr^{1/4} \pa\pm\|_{L^2} + {T}^{-1/2} \|\jr \pmn\|_{LE^1}.$$
We decompose
$$\|\jr^{1/4} \pa\pm\|_{L^2} \sim \sum_{R}\|R^{1/4}\pa\pm\|_{\lt((T,2T)\times A_{R})}$$
and for $R$ sufficiently close to $T$, we bound 
$$T^{-1/4}\|R^{1/4}\pa\pm\|_{\lt((T,2T)\times A_{R})}\ls T^{-1/2} \|\jr\pmn\|_{\leo}$$
while for smaller values of $R$, we absorb to the left-hand side term $\|\pa\pm\|_{\lt}$.
\end{proof}

The next proposition uses \cref{lem:aux} to obtain better pointwise decay for $\pmn$ inside $\{ r < t/2\}$. 

\begin{theorem}\label{thm:r-t}
Let $\phi$ solve \eqref{eq:problem} with the assumption of \cref{def:SILED}. Assume that 
\begin{equation}\label{r bds}
\phi_{\le M}|_{r \le 3t/4} \ls \jr^{-1}\ju^{1/2 - q}
\end{equation}
for an $M$ that is sufficiently larger than $m$.
	Then we have 
	$$ \pm|_{r\le 3t/4}\ls  \jt\inv \ju^{1/2 - q} .$$
\end{theorem}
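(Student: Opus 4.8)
The plan is to derive the improved bound $\pm|_{r\le 3t/4}\ls \jt\inv\ju^{1/2-q}$ from the local energy estimate in \cref{lem:aux} together with the Sobolev-type tool of \cref{lem:Sob} (or \cref{DyadLclsd}), using the hypothesis \cref{r bds} to feed the right-hand side. First I would fix a dyadic time scale $T$ and localise to $\inte = C_T^{<3T/4}$, so that it suffices to prove the pointwise bound on the dyadic slab $\{T\le t\le aT,\ r\le 3t/4\}$ with constant uniform in $T$. By \cref{lem:aux} applied with the multiindex level $m$ replaced by something slightly smaller than $M$, we have
\begin{equation*}
\|\pm\|_{LE^1(\inte)}\ls T\inv\|\jr\pmn\|_{LE^1(\inte)},
\end{equation*}
and on $\inte$ one has $\jr\ls T$ only fails near $r\sim 3T/4$; more importantly $\jr$ is not bounded by a constant there, so the key point is that the hypothesis \cref{r bds} already supplies pointwise control of $\pmn$ (and, via \cref{derbound} or \cref{cor:derbd}, of $\pa\pmn$) in $\{r\le 3t/4\}$, which we convert into an $L^2$ bound on the slab.

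The core computation I would carry out is to estimate $\|\jr\pmn\|_{LE^1(\inte)}$ using \cref{r bds}. On the slab $\inte$ one has $\ju\sim t\sim T$ in the bulk region $r\le T/2$, and more generally $\ju=\la t-r\ra$ ranges between $1$ and $\sim T$ as $r$ ranges up to $3T/4$. Decomposing $\inte$ dyadically in $R$ (with $R\lesssim T$), on each piece $C_T^R$ we use $\pmn\ls R\inv \ju^{1/2-q}$ and $\pa\pmn\ls R\inv\ju^{-1/2-q}$ (the latter from \cref{derbound}, gaining $\mu\inv$ with $\mu=\la\min(r,|t-r|)\ra$). The measure of $C_T^R$ is $\sim T R^2$ when $R\le T/8$ (and $\sim T\cdot T^2$ near $R\sim T$), so
\begin{equation*}
\|\jr\pmn\|_{LE(C_T^R)}\ls R\cdot R\inv\cdot \ju^{1/2-q}\cdot (TR^2)^{1/2}/ R^{1/2}\cdots
\end{equation*}
— I would be careful with the $\la r\ra^{-1/2}$ weight built into the $LE$ norm — and the gradient term $\|\jr\pa\pmn\|_{LE(C_T^R)}$ behaves similarly but with an extra $\mu\inv$. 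Summing in $R$ (the sum converges because the worst contribution is at $R\sim T$ and scales like $T^{1/2+\text{something}}$), one obtains $\|\jr\pmn\|_{LE^1(\inte)}\ls T^{3/2}\cdot T^{1/2-q}=T^{2-q}$ up to logarithms, hence by \cref{lem:aux},
\begin{equation*}
\|\pm\|_{LE^1(\inte)}\ls T\inv\cdot T^{2-q}=T^{1-q}.
\end{equation*}
Finally I would recover the pointwise bound: on the inner region $\{r\le T/2\}$ one has $\ju\sim T$, and applying \cref{DyadLclsd} (or \cref{lem:Sob} in the spatial slices together with the $LE^1$ control in time) on dyadic pieces $C_T^R$ converts $\|\pm\|_{LE^1}\ls T^{1-q}$ into $\pm\ls T^{-1}\cdot T^{1-q}\cdot$(appropriate power) — tracking the $|C_T^R|^{-1/2}$ loss in \cref{DyadLclsd} and the fact that $\ju\sim T\sim t$ inside the cone, this yields exactly $\pm|_{r\le T/2}\ls \jt\inv\ju^{1/2-q}$. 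For the transitional region $T/2\le r\le 3T/4$, where $\ju$ can be small, one interpolates between this bound and the hypothesis \cref{r bds} itself (which there already reads $\pmn\ls \jr\inv\ju^{1/2-q}\ls \jt\inv\ju^{1/2-q}$ since $\jr\sim\jt$), so there is nothing to prove there.

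The main obstacle I anticipate is the bookkeeping of weights and dyadic sums near the boundary $r\sim 3t/4$ of the interior region: there $\jr\sim\jt$ and $\ju$ is comparable to $\jt$ as well, so the factor $\jr$ in \cref{lem:aux} is genuinely of size $T$ and the $LE^1$ norm of $\jr\pmn$ is dominated by this outermost shell; one must check that the $T\inv$ prefactor in \cref{lem:aux} exactly compensates, and that the Sobolev loss $|C_T^R|^{-1/2}$ does not cost more than the available powers of $T$. A secondary technical point is justifying that $\pa\pmn$ obeys the expected $\mu\inv$-improved bound on all of $\inte$ via \cref{derbound} (which requires $\pmn$ to be controlled at a slightly higher vector-field level, hence the hypothesis is stated for $M$ sufficiently larger than $m$); once that is in hand the rest is routine. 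I would also need the commutator remark used in \cref{lem:aux} — that $[P,\chi_\inte]$ is harmless — which is already supplied.
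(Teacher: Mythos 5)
Your high-level strategy matches the paper's (localise to $\inte$, invoke \cref{lem:aux}, bound the right-hand side by feeding in the hypothesis \cref{r bds} together with \cref{derbound}, then convert the $LE^1$ bound back to a pointwise one via \cref{DyadLclsd}), but your power-counting is off by a factor of $T$, and the final paragraph hides this with an unresolved ``(appropriate power).'' Concretely: you estimate $\|\jr\pmn\|_{LE^1(\inte)}\ls T^{2-q}$, but the correct bound is $T^{1-q}$. The point you are missing is that one should not bound $\jr\pmn$ crudely; instead one observes that, in the interior region $r\le 3t/4$ one has $\mu\sim\jr$, so \cref{derbound} gives $\jr|\de\pmn|\ls|\pmn|$ pointwise, and the $\jr^{-1}$ weight built into the zeroth-order piece of the $LE^1$ norm cancels the $\jr$ in $\jr\pmn$ outright; together these give the pointwise domination $\|\jr\pmn\|_{LE^1(\inte)}\ls\|\pmn\|_{LE(\inte)}$. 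That $LE$ norm is a supremum (not a sum) over $R$, and each piece is
\begin{equation*}
\|\jr^{-1/2}\pmn\|_{L^2(C_T^R)}\ls R^{-1/2}\cdot R^{-1}T^{1/2-q}\cdot(TR^3)^{1/2}=T^{1-q},
\end{equation*}
uniformly in $R$, so $\|\pmn\|_{LE(\inte)}\ls T^{1-q}$, and hence $\|\pm\|_{LE^1(\inte)}\ls T^{-q}$. Only with $T^{-q}$, not your $T^{1-q}$, does \cref{DyadLclsd} (which converts an $LE^1$-type bound to $L^\iy$ at the cost of exactly $|C_T^R|^{-1/2}R^{3/2}\sim T^{-1/2}$) produce $\pm\ls T^{-1/2-q}\sim\jt^{-1}\ju^{1/2-q}$.

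Two further slips compound the issue: you write $|C_T^R|\sim TR^2$, but this is a $3+1$-dimensional spacetime region so $|C_T^R|\sim TR^3$; and you record the derbound gain as $\pa\pmn\ls R^{-1}\ju^{-1/2-q}$, i.e.\ a $\ju^{-1}$ gain, whereas in the interior $\mu=\la\min(r,|t-r|)\ra\sim\jr$, so the gain is $\jr^{-1}$, giving $\pa\pmn\ls\jr^{-2}\ju^{1/2-q}$. The sentence ``Summing in $R$'' is also not how $LE$ works (it is a supremum). None of these are repairable by ``tracking an appropriate power'' at the end: once you lose the $\jr$-cancellation in $\|\jr\pmn\|_{LE^1}$, the chain yields $\pm\ls T^{1/2-q}$, not $T^{-1/2-q}$. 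The fix is precisely the one-line observation $\|\jr\pmn\|_{LE^1}\ls\|\pmn\|_{LE}$.
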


\begin{proof}
By \cref{derbound} and \cref{r bds},
$$ T\inv \|\jr \pmn\|_{LE^1} \ls T\inv \|\pmn\|_{LE} \ls T^{ - q}.$$	
Therefore \cref{lem:aux} implies
$$\|\pm\|_{LE^1(\inte)} \ls T^{- q}$$
and the conclusion now follows by \cref{DyadLclsd}. 
\end{proof}

\section{The pointwise decay iteration} \label{sec:iter}

\begin{theorem}\label{thm:iteration}
If $u>1$, then 
\begin{equation}
\label{full}\pm \ls \jv\inv \ju^{-\min(\xd,1)}.
\end{equation}
\end{theorem}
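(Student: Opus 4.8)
The plan is to run a finite induction on a decreasing sequence of decay exponents, feeding at each stage the current pointwise bound for $\pmn$ into the decomposition $\phi_J=\phi_1+\phi_2+\phi_3$ of \cref{decomp} and then applying the fundamental‑solution estimates \cref{conversion,lem:cone}. First I would reduce to zero data by \cref{rem:id}, and note that on $\{\,t-1<r\,\}$, i.e.\ outside $\{u>1\}$, the claim is already \cref{u/v decay}; so only $\{u>1\}$ remains. The seed is $\pmn\ls\jr\inv\ju^{1/2}$, which is \cref{u/v decay} together with $\jv\inv\ls\jr\inv$.

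For the inductive step, assume $\phi_{\le M}\ls\jr\inv\ju^{q}$ on $\{u>1\}$, where $M$ is much larger than $m$ (a fixed number of vector fields is spent per step, hence the need for $N$ large). By \cref{2ndDeBd} and \cref{derbound} we then have $\pa\phi_{\le\cdot}\ls\mu\inv\jr\inv\ju^{q}$ and $\pa^2\phi_{\le\cdot}\ls\mu^{-2}\jr\inv\ju^{q}$; combining these with the coefficient bounds of \cref{met.ass} — crucially with $|H^{\Lbar\Lbar}_{\le N}|\ls\ju^{\xd}\jt^{-\xd}$, the one component of the quasilinearity that fails to decay faster — gives, for each of $G_1$, $\pat G_2$ and $G_3=f_{\le m}$, a radial majorant of the form $h\ls\jr^{-\x}\jv^{-\xb}\ju^{-\eta}$. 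Near the cone, where $\jr\sim\jv\sim\jt$ and $\mu\sim\ju$, the binding term is $\sim H^{\Lbar\Lbar}\p\,\pa_{\Lbar}\pa_{\Lbar}\p\ls\jr^{-2-\xd}\ju^{2q+\xd-2}$; the linear pieces $G_1$ and $\pat G_2$ are strictly better because their coefficients already carry $\jr^{-2}$ or $\jr^{-3}$ and, for $\phi_2$, the bad $\pa$ has been traded for an outer $\pat$ via \cref{D decomp}. Applying \cref{conversion} turns $\ju^{q}$ into $\ju^{q'}$ with $q'<q$; tracking the arithmetic the exponent runs $q=\tfrac12\mapsto 0\mapsto -\min(\xd,1)$, after which $q'=-\min(\xd,1)$ is a fixed point. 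The $\min(\,\cdot\,,1)$ ceiling is precisely the branch $\tilde\eta=-1$ for $\eta>1$ in \cref{conversion} — the $t^{-2}$ barrier of the $3+1$ wave equation.

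The estimate produced by \cref{conversion} carries only $\jr\inv$ spatial decay, which is too weak in $\{r\le t/2\}$; there I would invoke \cref{thm:r-t} to upgrade $\jr\inv\ju^{q'}$ to $\jt\inv\ju^{q'}\sim\jv\inv\ju^{q'}$, closing the induction with the spatial weight back to $\jv\inv$. One application of \cref{lem:cone} to the near‑cone $\pat G_2$ contribution — which estimates $\Box\inv(\pat G_2)$ without trading its $\jr^{-3}$ decay for $\ju$‑decay — is then needed to absorb the residual $\ju^{\epsilon}$‑loss that \cref{conversion} incurs at the fixed point and to reach the sharp exponent. After finitely many steps we obtain $\pm\ls\jv\inv\ju^{-\min(\xd,1)}$ on $\{u>1\}$, which is \cref{full}.

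I expect the main obstacle to be the interplay, near $r=t$, between the slowly‑decaying coefficient $H^{\Lbar\Lbar}$ and the transversal second derivative $\pa_{\Lbar}\pa_{\Lbar}\p$: unlike the tangential derivatives $\tpa$, this term gains no extra decay near the cone from the vector‑field structure, so the only available margin is the $\ju^{\xd}\jt^{-\xd}$ smallness of $H^{\Lbar\Lbar}$. Keeping the $u$‑ and $v$‑weight bookkeeping consistent through \cref{decomp} — in particular ensuring the per‑step gain stays strictly positive until the fixed point and that the hypotheses of \cref{conversion} ($2<\x<3$ or $\x>3$, $\xb\ge0$, $\eta\ge-1/2$) remain satisfied at every stage — is the delicate part; by comparison the interior step via \cref{thm:r-t} and the accounting of lost vector fields are routine.
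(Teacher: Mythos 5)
Your overall architecture matches the paper's (reduce to zero data by \cref{rem:id}, iterate the decomposition \cref{decomp} through \cref{conversion}, upgrade $\jr\inv$ to $\jt\inv$ via \cref{thm:r-t}, one use of \cref{lem:cone} near the end), and your exponent chain for $\xd<1$ agrees with the paper's. The gap is in how you identify the binding term of the quasilinearity for $\xd\ge1$, and since that is the entire content of the ``$\min$'' in the theorem, the $\xd\ge1$ half is not actually established. You take the binding contribution to $f_{\le m}$ to be $H^{\Lbar\Lbar}\p\,\Lbar^2\p$ with coefficient smallness $(\ju/\jt)^{\xd}$, and attribute the cap at $1$ to the $\tilde\eta=-1$ branch of \cref{conversion}. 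But run your iteration literally for, say, $\xd=1.2$: from $q=1/2$ you get $\eta=1-\xd=-0.2$, then $q\mapsto 0\mapsto -1\mapsto -1.2$, and $-1.2=-\xd$ is the fixed point, not $-\min(\xd,1)=-1$; in particular $-1$ is \emph{not} a fixed point of your recursion, contrary to what you assert. For $\xd>3/2$ the very first step already has $\eta=1-\xd<-1/2$, outside the hypotheses of \cref{conversion}. So the $\tilde\eta=-1$ branch does not produce the ceiling; it is the relevant branch precisely when $\xd<1$, where the fixed point is $-\xd$ and nothing is capped.

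The real obstruction is the piece you dismiss. In the null frame, $H^{\x\xb}\pa_{\x\xb}\p \in H^{\Lbar\Lbar}\Lbar^2\p + S^{Z}(1)\,\tpa\pa\p$, and the second summand has a merely bounded coefficient — it carries no $(\ju/\jt)^{\xd}$ factor — and gains only a single power of $\ju/\jt$ from the tangential derivative via \cref{tang.prop}. For $\xd\ge1$ it is this $S^Z(1)\,\tpa\pa\p$ contribution, not $H^{\Lbar\Lbar}\Lbar^2\p$, that is binding; it yields
\begin{equation*}
|(H^{\x\xb}\pa_{\x\xb}\p)_{\le m}| \ls \Big(\frac{\ju}{\jt}\Big)^{\min(\xd,1)}\big(\mu\inv|\pa\pmn|+\mu\inv\jr\inv|\pmn|\big),
\end{equation*}
after which the iteration closes at $\ju^{-\min(\xd,1)}$. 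Your remark that the tangential terms ``gain extra decay near the cone'' and that ``the only available margin is the $\ju^{\xd}\jt^{-\xd}$ smallness of $H^{\Lbar\Lbar}$'' is exactly the claim that fails once $\xd\ge1$: the tangential terms gain $\ju/\jt$ and no more, and that single power is where the answer saturates. You need the null-frame/tangential estimate above (the paper's treatment of the ``large-$\xd$'' case) before the iteration you describe will land on the stated rate.
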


\begin{proof} %
Suppose first that $\xd < 1$. We begin by showing that $\pmn \ls \jr\inv$. By \cref{2ndDeBd}, we have
$$(\Hab\p\paab\p)_{\le m+n} \ls (\f\ju\jt)^{\xk} (\f{\ju^{1/2}}\jt)^{2} ( \f\jt{\jr\ju})^{2}
	= (\f1\jt)^{\xk} \ju^{-1+\xd} \jr^{-2}. $$
Thus in the notation of \cref{conversion}, we have $\x=2, \xb = \xd, \eta = 1 - \xd$. We can change this to $\x = 2+, \beta = \xd -, \eta = 1 - \xd$ because we have $\jr \ls \jt$ here (note that $\xd>0$ is positive). 
By \cref{Bd1} we conclude $\pmn \ls \jr\inv$. By \cref{thm:r-t}, $\pmn \ls \jv\inv$. 

In the notation of \cref{conversion}, we now have $\x +\xb = 2 + \xd$ and $\eta = 2-\xd$.
Thus we have $\jr\pmn\ls \ju^{-\xd}$. By \cref{thm:r-t}, we obtain \cref{full} for $\xd$ values that are smaller than 1, i.e. we conclude the proof of the main theorem in the small-$\xd$ case. 

For the large-$\xd$ ($\delta \ge 1$) case, note 
\begin{align}\label{tang.prop}
\begin{split}
(\pat+\pa_{r}) \p \in S^{Z}(\frac{ u}{t}) \pa \p + \frac{1}{t} S\p, \quad 
\pao \p \in S^{Z}(\la r\ra\inv) \xO\p
\end{split}.
 \end{align}
Recall that 
$$H^{\x\xb}\pa_{\x\xb}\p \in H^{\Lbar\Lbar} \Lbar^2\p + S^{Z}(1) \bar\pa \pa\p, \quad \Lbar := \pat - \pa_{r}.$$
Utilising \cref{tang.prop}, we see that near the light cone, the term $S^{Z}(1)\bar\pa\pa\p$ gains $\f\ju\jt$, so that any extra factor of $(\f\ju\jt)^{\xd-1}$, in the $\xd \ge1$ case, gains nothing.  More precisely, by \cref{met.ass,tang.prop}, 
\begin{align*}
|(H^{\x\xb}\pa_{\x\xb} \p)_{\le m}| 
&\ls \f{\ju^\xd}{\jt^\xd}|\pa^2\pm| +  |\bar\pa\pa\pm|\\
&\ls \f{\ju^\xd}{\jt^\xd}|\pa^2\pm| +  \f\ju\jt |\pa^{2}\pm| + \f1\jt|\pa\pm|\\
&\ls (\f\ju\jt)^{\min(\xd,1)}|\pa^{2}\pm| + \f1\jt|\pa\pm|\\
&\ls (\f\ju\jt)^{\min(\xd,1)}\big(\mu\inv |\pa\pmn| + \mu\inv\jr\inv|\pmn| \big).\end{align*}
We now proceed in the same way as earlier in this proof to obtain \cref{full} for this large-$\xd$ case. This completes the proof of \cref{thm:main} because the decay rate \cref{u/v decay} suffices in the $\{u < 1\}$ region (recall the remark made in \cref{outline of iteration}). 
\end{proof}

\begin{remark}
For part (2) of \cref{thm:main}, the iteration also uses \cref{conversion} and can be found in \cite{L} or \cite{Loo22.II}. The linear problem $P'\p=0$ with zero initial data reaches a decay rate $\pm\ls \jv\inv\ju^{-1-}$ and so in particular the final decay rate for $P'\phi = f(\pa^2\p,\p,t,x)$ with zero initial data is still given by \cref{full}. 
\end{remark}

\section{Appendix: the two metrics $g_B$, transforming ILED-type estimates under the coordinate normalisation, and explaining the assumption \cref{met.ass}}\label{app}

\subsection*{More on the Schwarzschild metric}

The apparent singularity at the surface $r=2M$ (called the event horizon) is merely a coordinate singularity: defining the Regge-Wheeler tortoise coordinate
$$r^*:= r + 2M\log (r-2M) - 3M - 2M\log M$$
and setting $v = \ti t + r^*$, we express the metric in the $(r,v,\xo)$ coordinates is 
$$ds^2 = -\left( 1 - \f{2M}r \right) dv^2 + 2dvdr + r^2d\xo^2$$
and this extends analytically into the black hole region $r<2M$.

\newcommand{\gK}{g^{\text{Kerr}}}

\subsection*{The Kerr metric}  Let $M>0$ denote the mass of the black hole and let $aM$ denote its angular momentum, thus $a$ denotes the angular momentum per unit mass. The Kerr metric with mass $M$ and angular momentum $aM$ is the Ricci flat metric %
given in Boyer-Lindquist coordinates $(t,r,\theta,\varphi)$ \footnote{which are a generalisation of Schwarzschild coordinates. The black hole rotates in the $\varphi$ direction.
} by
$$ds^2 = g^K_{tt}dt^2 + g_{t\varphi}dtd\varphi + g^K_{rr}dr^2 + g^K_{\varphi\varphi}d\varphi^2
 + g^K_{\theta\theta}d\theta^2$$
 where $t \in \R$, $r > 0$, $(\varphi,\theta)$ are the spherical coordinates on $\S^2$ and
\[
 g^K_{tt}=-\frac{\Delta-a^2\sin^2\theta}{\rho^2}, \qquad
 g^K_{t\varphi}=-2a\frac{2Mr\sin^2\theta}{\rho^2}, \qquad
 g^K_{rr}=\frac{\rho^2}{\Delta},
 \]
\[ g^K_{\varphi\varphi}=\frac{(r^2+a^2)^2-a^2\Delta
\sin^2\theta}{\rho^2}\sin^2\theta, \qquad g^K_{\theta\theta}={\rho^2},
\]
with
\[
\Delta=r(r - 2M) +a^2, \qquad \rho^2=r^2+a^2\cos^2\theta.
\]
The inverse of the metric is then:
\[ g_K^{tt}=-\frac{(r^2+a^2)^2-a^2\Delta\sin^2\theta}{\rho^2\Delta},
\qquad g_K^{t\varphi}=-a\frac{2Mr}{\rho^2\Delta}, \qquad
g_K^{rr}=\frac{\Delta}{\rho^2},
\]
\[ g_K^{\varphi\varphi}=\frac{\Delta-a^2\sin^2\theta}{\rho^2\Delta\sin^2\theta}
, \qquad g_K^{\theta\theta}=\frac{1}{\rho^2}.
\]
We remark that the coefficients are completely explicit. 
	Observe that the coefficients are stationary and axially symmetric (that is, axisymmetric). That is, they are independent of $t$ and $\varphi$. The Killing vector fields associated with these two symmetries are $\f{\pa}{\pa t}$ and $\f{\pa}{\pa \varphi}$ respectively.

The reader can verify that substituting $a=0$ yields the Schwarzschild metric. 
In this article we assume that $0 < a \ll M$, so that the Kerr
metric is a small perturbation of the Schwarzschild metric. 

In the introduction, we mentioned that in the Kerr case, $r_e$ is a number such that $r_e \in (r_-,r_+)$ where $r_->0$ corresponds to the Cauchy horizon and $r_+$ to the event horizon. Viewing $\Delta$ as a quadratic polynomial in $r$, one can then see that the constants $r_\pminus$ are %
the two roots of $\Delta(r)$.

\subsection*{A remark on the energy norm and ILED norm in the normalised coordinates}
While \cref{EB,eq:SILED} are stated in terms of the original coordinates, the conclusion of the main theorem is, on the other hand, stated in terms of normalised coordinates.  We note here that \cref{EB,eq:SILED} are changed only minimally when transitioning from the original coordinates to the normalised coordinates. The change of coordinates only affects $\{ r\gg1\}$. %
The initial surface is changed by an $O(\log r)$ amount (see \cite{Tat}), but this is not an issue. 
Changing \cref{EB} can be done by standard energy estimates for the operator $\Box_g$.

\subsection*{Additional explanation for the assumption on $H^{\Lbar\Lbar}$ in \cref{met.ass}} Here we provide some context to the assumption on $H^{\Lbar\Lbar}$ in \cref{met.ass}. We shall express $\Hab\p + O(\p^2)$ in a null frame in the metric $g_B$:
$$\pa_{r^*} = (1 - \f{2M}r) \xo^i\pai, \quad \Lbar = \pat - \pa_{r^*}, \quad L =  \pat + \pa_{r^*}, \quad A = A^i(\xo)\pai, \quad A_* = A_*^i(\xo)\pai$$
where $L$ and $\Lbar$ are null vectors and $A,A_*$ are orthonormal vectors
$$g_B(L,L) = g_B(\Lbar,\Lbar) = 0, \qquad g_B(L, \Lbar) = -2(1 - \f{2M}r)$$
$$g_B(A,A) = g (A_*,A_*) = 1, \qquad g_B(A, A_*) = 0.$$
In addition, these four vectors $L, \Lbar, A, A_*$ are tangential to the constant-$r$ spheres:
$$g_B(L,\calV) = 0 = g_B(\Lbar, \calV) \qquad \calV \in \{A, A_*\}.$$
In what follows, we explain how the additional decay assumption on $H^{\Lbar\Lbar}$ in \cref{met.ass} arises. We expand $\hab := \Hab\p + O(\p^2)$ in our null frame. Let $\calT := \{ L,A,A_*\}$, with $\calT$ for ``tangential.''
\begin{align}
\begin{split}
h^{\alpha\beta}
&=h^{\uL\uL}{\uL}^\alpha{\uL}^\beta
+ \sum_{T\in \mathcal{T}} h^{\uL T} ({\uL}^\alpha T^\beta +T^\alpha {\uL}^\beta)
+\sum_{U,T\in \mathcal{T}} h^{UT} U^\alpha T^\beta
\end{split}
\end{align}and this indicates that the first term in the sum decays slowest, because the $\Lbar$ derivative of the solution $\p$ decays more slowly than all the other derivatives of $\p$ near the light cone. Thus we assume faster pointwise decay on $h^{\uL\uL}$ near the light cone, and this amounts to assuming faster pointwise decay on $H^{\uL\uL}$ near the light cone, as seen in \cref{met.ass}.

\end{document}